\theoremstyle{plain}
\newtheorem{thm}{Theorem}
\newtheorem{prop}[thm]{Proposition}
\theoremstyle{definition}
\newtheorem*{mydef}{Definition}
\theoremstyle{remark}
\let\myskip=\medskip
\def\st{\,\,\big|\,\,}
\def\bs{\backslash}
\def\<{\langle}
\def\>{\rangle}
\let\ge=\geqslant
\let\le=\leqslant
\let\emptyset=\varnothing
\def\definebb#1=#2.{\def#1{{{\mathbb #2}^{\vphantom{x}}}}}
\def\calF{{\mathcal F}}
\def\calE{{\mathcal E}}
\DeclareMathOperator{\Cl}{Cl}
\DeclareMathOperator{\Int}{Int}
\DeclareMathOperator{\Isom}{Isom}
\DeclareMathOperator{\lcm}{lcm}
\DeclareMathOperator{\PSU}{PSU}
\DeclareMathOperator{\SL}{SL}
\DeclareMathOperator{\SU}{SU}
\let\Im=\undefined \DeclareMathOperator{\Im}{Im}
\let\Re=\undefined \DeclareMathOperator{\Re}{Re}
\def\dd{\partial}
\def\al{{\alpha}}
\def\la{{\lambda}}
\def\Ga{{\Gamma}}
\def\thk{{\frac{\pi k}{2p}}}
\def\ellE{{\ell}}
\def\bGa{\overline\Ga}
\def\Grp{{\bar G}}
\def\tGrp{G}
\def\Lrp{{\bar L}}
\def\tLrp{L}
\def\WZZW{\begin{pmatrix} \bar w&z \\ \bar z&w \end{pmatrix}}
\def\Eg{E_g}
\def\Ig{I_g}
\def\Hg{H_g}
\def\Ee{E_e}
\def\Fe{F_e}
\def\FE{F_{\calE}}
\def\bE{{\bar E}}
\def\bI{{\bar I}}
\def\Qx{Q_x}
\def\Qu{Q_u}
\def\Gau{{\Ga_1(u)}}
\def\Gauou{{\Gau\backslash\{u\}}}
\def\cupQxGauou{{\bigcup\limits_{\hbox to 0pt{\hss$\scriptstyle x\in\Gauou$\hss}}\Qx}}
\begin{document}

\author[Nasser Bin Turki]{Nasser Bin Turki}
\address{Department of Mathematics, College of Science, King Saud University, P.O.~Box 2455, Riyadh 11451, Saudi Arabia}
\email{nassert@ksu.edu.sa}

\author[Anna Pratoussevitch]{Anna Pratoussevitch}
\address{Department of Mathematical Sciences\\ University of Liverpool\\ Peach Street \\ Liverpool L69~7ZL}
\email{annap@liv.ac.uk}

\title[Two series of Lorentz polyhedral fundamental domains]{Two series of polyhedral fundamental domains for Lorentz bi-quotients}

\begin{date} {\today} \end{date}

\thanks{The authors extend their appreciation to the Deanship of Scientific Research at King Saud University for funding this work through the research group No (RG-1440-142).
To produce the images of polyhedra we used Geomview, a programme developed at the Geometry Centre, University of Minnesota, Minneapolis.}


\begin{abstract}
The main aim of this paper is to give two infinite series of examples of Lorentz space forms
that can be obtained from Lorentz polyhedra by identification of faces.
These Lorentz space forms are bi-quotients of the form  $\Ga_1\backslash\tGrp/\Ga_2$,
where $\tGrp=\widetilde{\operatorname{SU}(1,1)}\cong\widetilde{\operatorname{SL}(2,\r)}$
is a simply connected Lie group with the Lorentz metric given by the Killing form,
$\Ga_1$ and $\Ga_2$ are discrete subgroups of~$\tGrp$ and $\Ga_2$ is cyclic.
A construction of polyhedral fundamental domains for the action of $\Ga_1\times\Ga_2$ on~$\tGrp$ via $(g,h)\cdot x=gxh^{-1}$ was given in the earlier work of the second author.
In this paper we give an explicit description of the fundamental domains obtained by this construction for two infinite series of groups.
These results are connected to singularity theory as the bi-quotients $\Ga_1\backslash\tGrp/\Ga_2$ appear as links of certain quasi-homogeneous $\q$-Gorenstein surface singularities,
i.e.\ the intersections of the singular variety with sufficiently small spheres around the isolated singular point.
\end{abstract}

\subjclass[2000]{Primary 53C50; Secondary 14J17, 32S25, 51M20, 52B10}






\keywords{Lorentz space form, polyhedral fundamental domain, quasi-homogeneous singularity.}

\maketitle

\section{Introduction}

\label{intro}

\noindent
In this paper we study the Lorentz space forms obtained as bi-quotients $\Ga_1\bs\tGrp/\Ga_2$,
where $\tGrp$ is the universal cover
$$\tGrp=\widetilde{\SU(1,1)}\cong\widetilde{\SL(2,\r)},$$
$\Ga_1$ and $\Ga_2$ are discrete subgroups of~$\tGrp$ of the same finite level, and $\Ga_2$ is cyclic.
The group $\Ga_1\times\Ga_2$ acts on $\tGrp$ via~$(g,h)\cdot x=gxh^{-1}$.
The {\sl level} of a subgroup $\Ga$ of $\tGrp$ is the index of $\Ga\cap Z(\tGrp)$ in the centre $Z(\tGrp)\cong\z$.
Subgroups of~$\tGrp$ of finite level~$k$ are those subgroups that can be obtained as pre-images of subgroups of the $k$-fold covering of~$\PSU(1,1)$.
We use the construction introduced in~\cite{Pr:qgor-fund} of polyhedral fundamental domains for this action 
which generalizes the results of~\cite{Pr:statia}, \cite{BPR}, \cite{BKNRS}.
This construction leads to a description of the bi-quotients $\Ga_1\bs\tGrp/\Ga_2$
as polyhedra with faces identified according to some gluing rules on the boundary.

\myskip
Let $\Ga(p_1,3,3)^k$ be a subgroup of $\tGrp$ of level~$k$ such that its image under the projection to~$\PSU(1,1)$ is a triangle group $\Ga(p_1,3,3)$,
see section~\ref{triangles} for more details.
Subgroups of~$\tGrp$ and~$\PSU(1,1)$ act on the disc model~$\d$ of the hyperbolic plane since $\Isom^+(\d)\cong\PSU(1,1)$.
Let $u\in\d$ be the fixed point of a generator of~$\Ga(p_1,3,3)$ of order~$p_1$.
Let $(C_{p_2})^k$ be a subgroup of $\tGrp$ of level~$k$ such that its image under the projection to~$\PSU(1,1)$ is a cyclic group of order~$p_2$ generated by an elliptic element with fixed point~$u$.
The aim of this paper is to give explicit descriptions of fundamental domains for two infinite series of groups $\Ga(k+3,3,3)^k\times(C_3)^k$ and $\Ga(2k+3,3,3)^k\times(C_3)^k$,
where $k$ is a positive integer not divisible by~$3$.

\myskip
The bi-quotients $\Ga_1\bs\tGrp/\Ga_2$ are the links of certain quasi-homogeneous $\q$-Go\-ren\-stein surface singularities as explained in~\cite{Pr:qgor}, see also~\cite{Dol83}.
The motivation for the choice of the series $\Ga(k+3,3,3)^k\times(C_3)^k$ and $\Ga(2k+3,3,3)^k\times(C_3)^k$ is that they correspond
to $\q$-Gorenstein surface singularities that are obtained as quotients of certain singularities in the series~$E$ and~$Z$ according to the classification by V.I.~Arnold.
We listed in the table below the normal form of the singularity as well as the level~$k$ of~$\Ga_i$ and the signature~$(p,q,r)$ of the image of~$\Ga_1$ in $\PSU(1,1)$, 
for more details see \cite{AGZV}, \cite{Dol74}, \cite{Dol75} and \cite{Pr:diss}.

\begin{table}[h]
\centering
\begin{center}
    \begin{tabular}{|c|c|c|c|}
     \hline
     level & $(p,q,r)$ & type & normal form \\
     \hline
     &&&\\
     $k$ & $(k+3,3,3)$ &$E_{4k+10}$ &  $x^3+y^{2k+6}+ z^2$ \\
     &&&\\
     \hline
     &&&\\
     $k$ & $(2k+3,3,3)$ &$Z_{4k+9}$ &  $x^3y+y^{2k+4}+ z^2$ \\
     &&&\\
     \hline
    \end{tabular}
\end{center}
\label{tab:Singularity}
\end{table}

\myskip\noindent
We will now outline the fundamental domain construction described in~\cite{Pr:qgor-fund}.
Suppose that $\Ga_1$ and~$\Ga_2$ are discrete subgroups of level~$k$ in~$\tGrp$ and $\Ga_2$ is cyclic.
We assume the existence of a joint fixed point $u\in\d$ of~$\Ga_1$ and~$\Ga_2$.
Let $p_i=|(\bGa_i)_u|$ be the size of the isotropy group of~$u$ in the image~$\bGa_i$ of $\Ga_i$ in~$\PSU(1,1)$ for~$i=1,2$.
This construction works if~$p=\lcm(p_1,p_2)>k$.

\myskip
We think of the group $\tGrp$ as a hypersurface embedded in the bundle $\tLrp=\r_+\times\tGrp$.
The Killing form on $\tGrp$ induces a pseudo-Riemannian metric of signature $(2,1)$ on~$\tGrp$ and of signature~$(2,2)$ on $\tLrp$.
The hypersurface~$\tGrp$ is replaced with its piece-wise totally geodesic polyhedral model, specially adapted to the action of $\Ga_1\times\Ga_2$.
The fundamental domains~$F_{g_1g_2}$, $g_1\in\Ga_1$, $g_2\in\Ga_2$ for the action of $\Ga_1\times\Ga_2$ on the model polyhedron are its faces,
which are then projected onto~$\tGrp$ to obtain fundamental domains~$\calF_{g_1g_2}$ for the action of $\Ga_1\times\Ga_2$ on~$\tGrp$.
Similar ideas of projecting an affine construction with half-planes onto a quadric were used in the algorithmic construction of Voronoi diagrams for point sets
in the Euclidean and hyperbolic plane, compare~\cite{BY}.

\myskip
For $g\in\tGrp$ let $\Eg$ be the embedded tangent space of~$\tGrp$ at the point~$g$,
i.e.\ the $3$-dimensional totally geodesic submanifold of~$\tLrp$ which is tangent to~$\tGrp$ at the point~$g$.
The hypersurface~$E_g$ divides~$\tLrp$ into two connected components.
We will refer to the closures of these connected components as half-spaces and denote them by $\Hg$ and $\Ig$,
see section~\ref{elements} for more details.

\myskip
Let $e$ be the identity in~$\tGrp$.
The interior of the fundamental domain~$\Fe$ can be described as
$$\Fe^{\circ}=(\Ee\cap\dd\Qu)^\circ-\bigcup_{x\in\Gauou}\Qx,$$
where $\Gau\subset\d$ is the orbit of the point~$u$ under the action of~$\Ga_1$ and $\Qx$ for every $x\in\Gau$ is the prism-like polyhedron
$$\Qx=\bigcap\limits_{{(g_1,g_2)\in\Ga_1\times\Ga_2\atop g_1(u)=x}} H_{g_1\cdot g_2}.$$
According to Theorem~B in~\cite{Pr:qgor-fund},
if $\Ga_1$ is co-compact we can replace the union of~$\Qx$ over all $x\in\Gauou$ by the union over some finite subset $N\subset\Gauou$,
which is a crucial step on the way to obtain an explicit description of the fundamental domain~$\Fe$.
However, the proof of this fact uses a compactness argument 
which in general does not provide an explicit description of such a finite subset.
The first main result of this paper is to show that for a certain family of groups such a finite subset of~$\Gauou$ is given by the edge corona of~$\Ga_1$ (see section~\ref{triangles} for the definition).

\begin{thm}
\label{thm-corona}
Let $\Ga_1=\Ga(p_1,3,3)^k$ with $p_1\ge k+3$ and $\Ga_2=(C_3)^k$.
Assume that a generator of~$\Ga_1$ of order~$p_1$ and the cyclic group~$\Ga_2$ have a shared fixed point~$u\in\d$.
Then
$$\Fe^{\circ}=(\Ee\cap\dd\Qu)^\circ-\bigcup_{x\in{\calE}}\Qx,$$
where the finite set~$\calE$ is the edge corona of $\Ga_1(p_1,3,3)$ with respect to~$u$.
\end{thm}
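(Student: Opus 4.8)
The plan is to start from the general description
$$\Fe^{\circ}=(\Ee\cap\dd\Qu)^\circ-\bigcup_{x\in\Gauou}\Qx$$
and show that, under the hypothesis $p_1\ge k+3$, the only pieces $\Qx$ that actually intersect the ``core'' region $(\Ee\cap\dd\Qu)^\circ$ are those with $x$ lying in the edge corona $\calE$ of $\Ga(p_1,3,3)$ with respect to~$u$. Equivalently, I must prove the two inclusions: (i) $\Qx\cap(\Ee\cap\dd\Qu)^\circ=\emptyset$ for every $x\in\Gauou$ with $x\notin\calE$, so those terms may be dropped; and (ii) that dropping them does not over-shrink, which is automatic since removing fewer sets only enlarges the difference, so the content is entirely in~(i). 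Here $p=\lcm(p_1,3)=p_1>k$ since $p_1\ge k+3$, so the construction of~\cite{Pr:qgor-fund} applies and Theorem~B guarantees \emph{some} finite~$N$ works; the point is to pin it down as~$\calE$.

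The first concrete step is to set up coordinates adapted to the joint fixed point~$u$: conjugate so that $u$ is the centre of the disc~$\d$, so that the isotropy subgroup $(\bGa_1)_u$ and $\bGa_2=C_3$ are genuine rotations about the origin, and describe $\Qu$ as a prism over the regular $\lcm(p_1,3)$-gon (or the relevant polygon cut out by the rotations fixing~$u$) in the fibre direction of $\tLrp=\r_+\times\tGrp$. Next, for a neighbour $x=g_1(u)$, $g_1\in\Ga_1$, I would compute the half-spaces $H_{g_1\cdot g_2}$ explicitly using the formula for $\Eg$, $\Hg$, $\Ig$ from section~\ref{elements}; the key geometric fact to extract is a monotonicity/distance estimate saying that the ``influence region'' $\Qx$ reaches toward~$u$ by an amount controlled by the hyperbolic distance $d(u,x)$ and by the rotation numbers~$p_1,3$ — larger $d(u,x)$ pushes $\Qx$ entirely outside the core. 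The edge corona consists precisely of the $\Ga_1$-translates of~$u$ that are ``one edge away'' in the $(p_1,3,3)$-tessellation; the claim is that the threshold at which $\Qx$ stops meeting the core falls strictly between the corona distance and the next shell, and this is where the inequality $p_1\ge k+3$ is used — it makes the apex of the core prism (whose height in the $\r_+$-direction is governed by~$p$ versus~$k$) low enough that only the nearest shell of translates is relevant.

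The main obstacle, I expect, is the explicit half-space computation and the resulting estimate in step three: one needs a clean, checkable inequality comparing, on one side, the ``depth'' to which $H_{g_1\cdot g_2}$ with $g_1(u)=x$ protrudes over the totally geodesic sheet $\Ee$ near~$u$, and on the other side, the size of $(\Ee\cap\dd\Qu)^\circ$, and this must be done uniformly over the finitely many generator types of $\Ga(p_1,3,3)^k$ and over all $g_2\in C_3$ and all lifts/levels. A natural way to organise this is to reduce to the image groups in $\PSU(1,1)$ acting on~$\d$ — turning the statement into a purely hyperbolic-geometry fact about which Dirichlet-type cells in the $(p_1,3,3)$-tessellation are adjacent to the cell at~$u$ — and then lift, checking that the level~$k$ correction (the $\r_+$-component) only strengthens the separation when $p_1-k\ge 3$. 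Once step~(i) is in hand for $x\notin\calE$, the theorem follows immediately by substituting $\calE$ for $\Gauou$ in the displayed formula, and finiteness of~$\calE$ is clear from its definition in section~\ref{triangles}.
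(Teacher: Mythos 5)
Your plan has the right broad shape---reduce to a quantitative separation estimate, and show that $p_1\ge k+3$ makes it go through---which is also the paper's strategy (Propositions~\ref{in-out-cylinders}, \ref{prop:5}, \ref{prop:4.10} and Theorem~\ref{prop:7.1}). But there is a genuine gap in the way you set up step~(i). You reduce the theorem to showing $\Qx\cap(\Ee\cap\dd\Qu)^\circ=\emptyset$ for $x\in\Gauou\setminus\calE$ and declare the content is entirely there. That claim is strictly stronger than what is needed, and it is what the argument must \emph{not} try to prove. For $\Fe^\circ=\FE$ it suffices that $\Qx\cap\FE=\emptyset$ for $x\notin\calE\cup\{u\}$: the outer prisms are allowed to penetrate the core, as long as the region they reach is already carved out by the corona prisms. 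The paper's estimate shows that they do penetrate: for $|x|\ge R$, a point of $\pi(\Qx)$ in the core only satisfies $|z|\ge|w|\cdot f(R,|w|)$, and $f(R,\sec\thk)=\frac{1-\sqrt{1-R^2}}{R}<1$, so this bound does not exclude the core. The correct argument is a two-sided sandwich, and the piece entirely absent from your plan is the ``barrier'' step of Proposition~\ref{prop:5}: one must show that the inscribed cylinders of the corona prisms overlap in angle ($\hat s\le r$ in the notation there), so that $\bigcup_{x\in\calE}\pi(\Qx)$ forms a closed annulus separating $\FE$ from the outer $\Qx$'s. Without establishing that annular barrier, the disjointness of the outer prisms from $\FE$ cannot be concluded, and aiming instead at disjointness from the whole core would fail.

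A secondary problem is your closing reduction to ``a purely hyperbolic-geometry fact about which Dirichlet-type cells are adjacent,'' with the level-$k$ part merely ``strengthening the separation.'' The Dirichlet adjacency only \emph{defines} $\calE$; the theorem is a statement about the four-dimensional polyhedron, and $k$ enters essentially through the height $\tan\thk$ of the core, hence through the bound $|w|\le\sec\thk$ used on both sides of the sandwich. After writing $L$ and the shell radius $R$ in terms of $\al=\pi/(2p_1)$, the required inequality $\ellE^-(\sec\thk)\le\frac{1-\sqrt{1-R^2}}{R}$ reduces to $2\cos\thk\ge\csc\!\left(\frac{\pi}{4}-\frac{\al}{2}\right)$, and it is exactly the hypothesis $k\le p_1-3$ that gives $\thk\le\frac{\pi}{4}-\frac{3\al}{2}$ and makes this hold. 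Your proposal stops precisely where this computation, the real content of the proof, has to be carried out.
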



\myskip\noindent
Theorem~\ref{thm-corona} reduces the description of the fundamental domain to an intersection of finitely many unions of finitely many half-spaces.
For two infinite series of groups,
$$\Ga(k+3,3,3)^k\times(C_3)^k\quad\text{and}\quad\Ga(2k+3,3,3)^k\times(C_3)^k,$$
we can reduce the description further after some long but elementary computations with systems of linear inequalities,
for details see~\cite{BT}.
The results are summarized in Theorem~\ref{fund}.
The conjectural description of these fundamental domains for~$k=2$ was given in~\cite{Pr:qgor-fund}.
In this paper, following
~\cite{BT}, we confirm these conjectures and extend them to two infinite series.
We also describe some methods
that can be used to determine the fundamental domains for other series of groups.

\myskip
To state the results we will first introduce some notation.
We make use of the following construction to describe certain elements of~$\tGrp$:
Given a base-point $x\in\d$ and a real number $t$, let $\rho_x(t)\in\PSU(1,1)$ denote the rotation through the angle~$t$ about the point~$x$.
Thus we obtain a homomorphism $\rho_x:\r\to\PSU(1,1)$, which lifts to the unique homomorphism $R_x:\r\to\tGrp$ into the universal covering group.
The element $C=R_u(2\pi)$ is one of the two generators of the centre of~$\tGrp$. 
The stabilisers $(\Ga_1)_u$ and $(\Ga_2)_u=\Ga_2$ generate a cyclic group with generator $D=R_u(2\pi k/p)$, where $p=\lcm(p_1,p_2)$.

\begin{thm}
\label{fund}
Consider the group $\Ga_1\times\Ga_2=\Ga(p,3,3)^k\times(C_3)^k$.
Consider a triangle generating $\Ga(p,3,3)$.
Let $u$ and $v$ be vertices of this triangle with angles~$\pi/p$ and~$\pi/3$ respectively.
Let $\lambda =1 $ if $k=1\mod 3$ and $\lambda =2$ if $k=2\mod 3$.

\begin{enumerate}[$\bullet$]
\item
If $p=k+3$ then
$$F_e=E_e\cap H_D\cap H_{D^{-1}}\cap\bigcap _{m\in\z}(I_{a_m}\cup I_{b_m})$$
is a fundamental domain for $\Ga_1\times\Ga_2$, where
\begin{align*}
  a_0&=R_v(8\pi/3)\cdot D^{2 \lambda p-1}\cdot C^{\frac{-2(\lambda k+2)}{3}},\\
  a_m&=R_u^m(\pi/p)\cdot a_0\cdot R_u^{-m}(\pi/p)
  \quad\text{and}\quad
  b_m=a_m\cdot D.
\end{align*}
\item
If $p=2k+3$ then
$$F_e=E_e\cap H_D\cap H_{D^{-1}}\cap\bigcap_{m\in\z}(I_{a_m}\cup I_{b_m}\cup I_{c_m})$$
is a fundamental domain for $\Ga_1\times\Ga_2$, where
\begin{align*}
  a_0&=R_v(8\pi/3)\cdot D^{2 \lambda p-2}\cdot C^{\frac{-2(\lambda k+2)}{3}},\\
  a_m&=R_u^m(\pi/p)\cdot a_0\cdot R_u^{-m}(\pi/p),\quad
  b_m=a_m\cdot D
  \quad\text{and}\quad
  c_m=b_m\cdot D.
\end{align*}
\end{enumerate}
\end{thm}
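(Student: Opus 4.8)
The plan is to start from the general description of $\Fe^{\circ}$ furnished by Theorem~\ref{thm-corona}, which already reduces the infinite union $\bigcup_{x\in\Gauou}\Qx$ to the finite union over the edge corona~$\calE$ of $\Ga(p,3,3)$ with respect to~$u$. Since $\Ga_1=\Ga(p,3,3)^k$ with $p=k+3\ge k+3$ (resp.\ $p=2k+3\ge k+3$), the hypothesis $p_1\ge k+3$ of Theorem~\ref{thm-corona} holds, so we may write
\[
\Fe^{\circ}=(\Ee\cap\dd\Qu)^\circ-\bigcup_{x\in\calE}\Qx.
\]
The first task is to identify the points of~$\calE$ explicitly: using the triangle with vertices $u$ (angle $\pi/p$) and $v$ (angle $\pi/3$), I would enumerate the orbit points $x=g_1(u)$ for $g_1$ ranging over the relevant coset representatives in $\Ga(p,3,3)$, and record that the corona consists of one ``ring'' of points around~$u$, indexed by $m\in\z/\text{(something)}$ via the rotation $R_u(\pi/p)$. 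This is where the symmetry of the construction enters: conjugation by powers of the rotation $R_u^m(\pi/p)$ permutes the $\Qx$ cyclically, which is exactly why the half-spaces in the final answer appear in the conjugated form $a_m=R_u^m(\pi/p)\cdot a_0\cdot R_u^{-m}(\pi/p)$.

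Next I would compute each individual $\Qx$ as a finite intersection of half-spaces $H_{g_1\cdot g_2}$, where $(g_1,g_2)$ ranges over pairs with $g_1(u)=x$. Here $\Ga_2=(C_3)^k$ contributes the factor $D$ through $g_2\in\{e,D,D^2\}$ (recall $D=R_u(2\pi k/p)$ since $p_2=3$ and $p=\lcm(p_1,3)$), so for each corona point~$x$ the polyhedron $\Qx$ is an intersection of at most three half-spaces; complementing, $\tLrp-\Qx$ becomes the union $I_{a_m}\cup I_{b_m}$ (two half-spaces when $p=k+3$) or $I_{a_m}\cup I_{b_m}\cup I_{c_m}$ (three half-spaces when $p=2k+3$), with $b_m=a_m D$ and $c_m=b_m D$ reflecting multiplication by the generator of~$\Ga_2$. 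The heart of the matter is to pin down the single element $a_0$: one extracts the group element $g_1$ realising the ``first'' corona point as a word in the triangle-group generators, lifts it to~$\tGrp$, and rewrites the lift in the normal form $R_v(8\pi/3)\cdot D^{N}\cdot C^{M}$ — the exponents $N=2\lambda p-1$ (resp.\ $2\lambda p-2$) and $M=-2(\lambda k+2)/3$ come from tracking how many times the lift winds around the centre, which is controlled by the angle sum at the vertices and by the level~$k$; the case split on $\lambda$ (i.e.\ on $k\bmod 3$) is forced by the requirement that $M$ be an integer, equivalently $3\mid \lambda k+2$, which holds precisely for $\lambda=1$ when $k\equiv1$ and $\lambda=2$ when $k\equiv2$ mod~$3$.

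Finally I would verify that the resulting set really is a fundamental domain, not merely that the formula matches $\Fe^{\circ}$ on its interior. For this I would invoke the general theory of~\cite{Pr:qgor-fund}: the faces $F_{g_1g_2}$ of the model polyhedron tile it, so it suffices to check that the explicit $F_e$ written above is the closure of the set computed via Theorem~\ref{thm-corona}, and that the extra conditions $H_D\cap H_{D^{-1}}$ correctly encode the stabiliser $\langle D\rangle$ of~$u$ (these cut the ``angular wedge'' at~$u$ down to a single fundamental sector for the cyclic group generated by~$D$). The main obstacle I anticipate is the bookkeeping in the lift computation: determining the exact centre-winding exponents $N$ and $M$ requires carefully composing lifts $R_x(t)$ based at different points $x\in\d$ (the base-point changes between $u$ and $v$), and the relation $R_{g(x)}(t)=g\,R_x(t)\,g^{-1}$ only holds up to a central correction that depends on the lift of~$g$ — getting these central corrections right, consistently across all corona points, is the delicate part, and it is exactly the content of the ``long but elementary computations with systems of linear inequalities'' deferred to~\cite{BT}. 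A secondary check is that the half-space conditions indexed by $m$ are genuinely periodic in~$m$ (so the intersection over all $m\in\z$ is really a finite intersection up to the action of~$\langle D\rangle$), which follows from $D^p$ being central together with the conjugation formula for the~$a_m$.
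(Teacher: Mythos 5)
Your proposal follows the paper's own route exactly: reduce to the edge corona via Theorem~\ref{thm-corona} (after checking the hypothesis $p_1\ge k+3$), express each prism $\Qx$ in terms of half-spaces indexed by the cosets of $\langle D\rangle$, use conjugation by $R_u^m(\pi/p)$ to propagate $a_0$ around the corona, and recover $a_0$ itself by tracking central windings when rewriting a lift in the normal form $R_v(8\pi/3)\cdot D^N\cdot C^M$ --- with the $\lambda$-case-split correctly explained as the integrality condition $3\mid\lambda k+2$. This is precisely what the paper does: Theorem~\ref{fund} is presented as the output of Theorem~\ref{thm-corona} plus the explicit linear-inequality computations deferred to~\cite{BT}, so your sketch is a faithful reconstruction at essentially the same level of detail as the text itself.
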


\myskip\noindent
The general results in Theorem~\ref{fund} are illustrated by the images of fundamental domains for $\Gamma(k+3,3,3)^k \times (C_3)^k$ and $\Gamma(2k+3,3,3)^k \times (C_3)^k$ for~$k=2$
in Figures~\ref{fund-dom-533-3-2} and~\ref{fund-dom-733-3-2} respectively.
Some explanations of the figures are required.
We project the fundamental domain $F_e$ to the Lie algebra of~$\SU(1,1)$, which is a $3$-dimensional flat Lorentz space of signature $(2,1)$.
The result is a compact polyhedron with flat faces and a distinguished rotational axis of symmetry.
The direction of this axis is negative definite, and the orthogonal complement is positive definite.
Changing the sign of the pseudo-metric in the direction of the rotational axis transforms Lorentz space into a well-defined Euclidean space.
The image of the fundamental domain is then transformed into a polyhedron in Euclidean space with dihedral symmetry.
The direction of the rotational axis is vertical.
The top and bottom faces are removed to make the structure of the polyhedra clearer.


\begin{figure}[h]
\centering
\includegraphics[width=6.35cm]{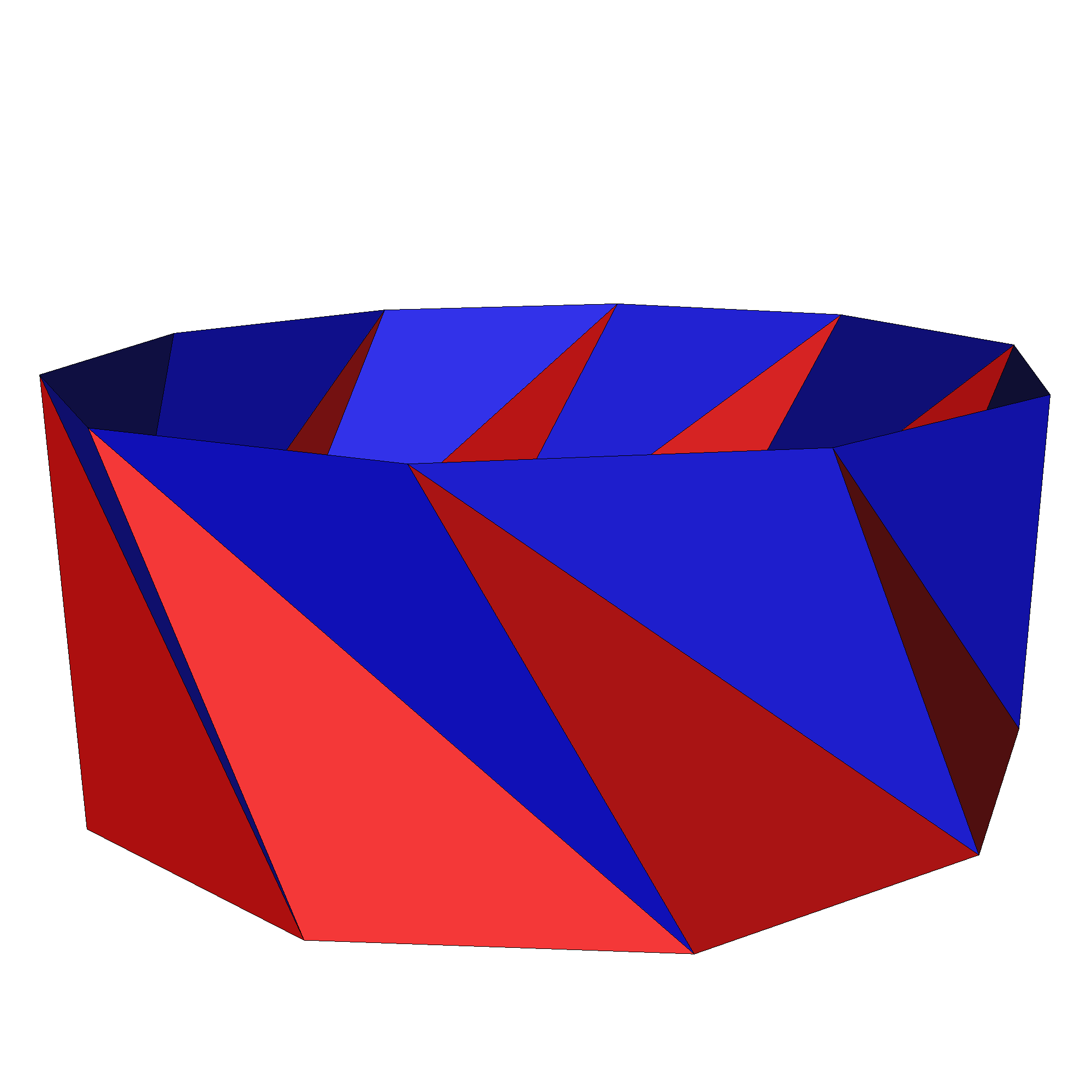}
\caption{Fundamental domain for $\Gamma(5,3,3)^2 \times (C_3)^2$.}
\label{fund-dom-533-3-2}
\end{figure}

\begin{figure}[h]
\centering
\includegraphics[width=6.5cm]{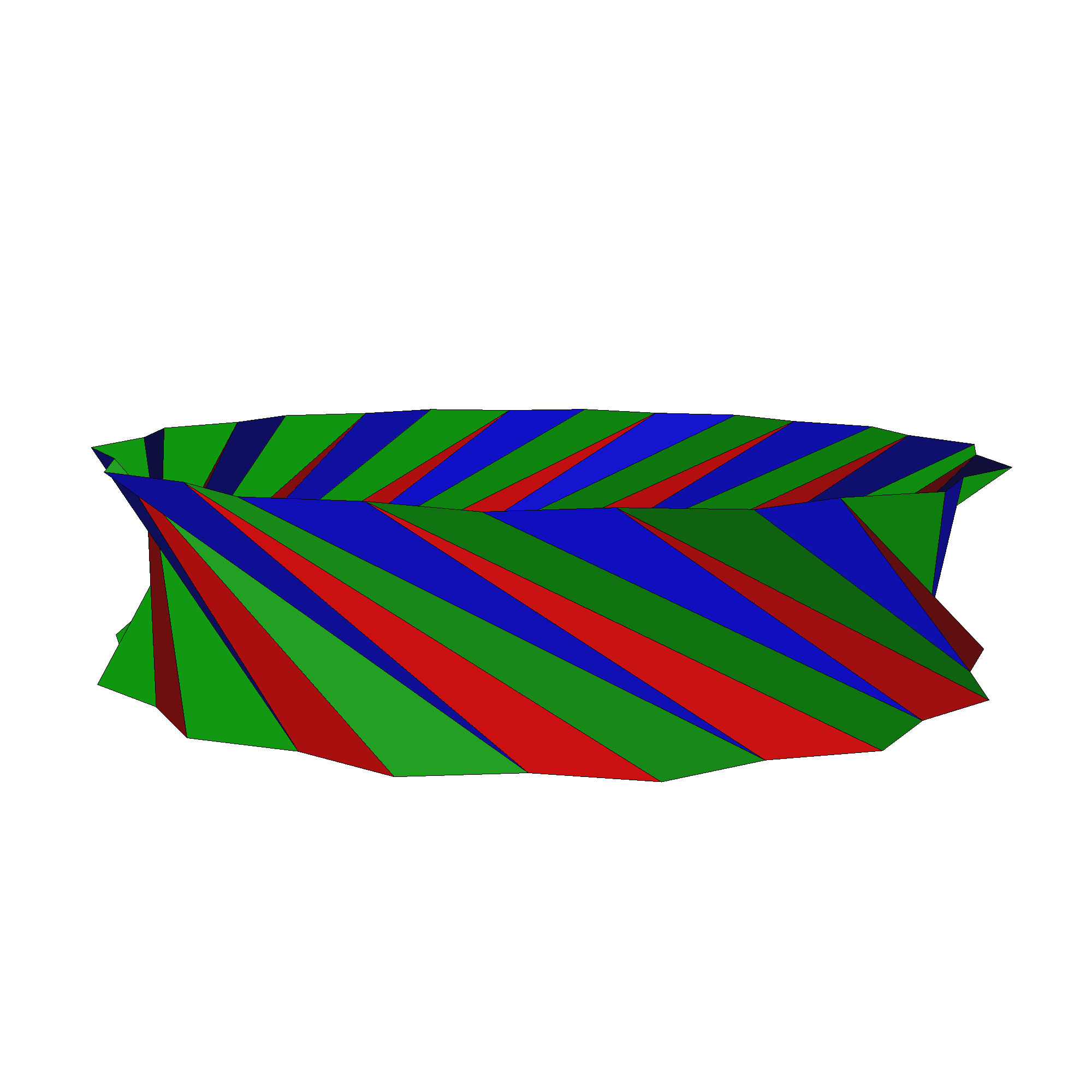}
\caption{Fundamental domain for $\Gamma(7,3,3)^2 \times (C_3)^2$.}
\label{fund-dom-733-3-2}
\end{figure}

\myskip\noindent
Figures~\ref{fig:pkcf} and~\ref{fig:p2kc} illustrate the identification schemes on the boundary of the polyhedron
for the infinite series $\Gamma(k+3,3,3)^k\times(C_3)^k$ and $\Gamma(2k+3,3,3)^k\times(C_3)^k$ respectively.
The face identifications are equivariant with respect to the dihedral symmetry of the polyhedron.
The faces of the same shading/colour are identified.
Numbers on the edges of shaded/coloured faces indicate the identified flags (face, edge, vertex).

\begin{figure}[h]
\centering
\begin{tikzpicture}[scale=0.5]
\draw[color=black,style=thick] (0,0) -- (4,0)node[pos=.5,sloped,below] {$a_{-1}$} -- (0,5) -- (0,0);
\draw[color=black,style=thick, fill=red] (0,5) -- (4,5)  node[pos=.5,below] {3} node[pos=.5,sloped,above] {$b_{-1}$} -- (4,0) node[pos=.5,left] {2} -- (0,5)node[pos=.5,right] {1};
\draw[color=black,style=thick, fill=red] (4,0) -- (8,0) node[pos=.5,above] {1} node[pos=.5,sloped,below] {$a_{0}$} -- (4,5) node[pos=.5,left] {3} -- (4,0)node[pos=.5,right] {2};
\draw[color=black,style=thick, fill=blue] (4,5) -- (8,5) node[pos=.5,below] {3} node[pos=.5,sloped,above] {$b_{0}$} -- (8,0)node[pos=.5,left] {2} -- (4,5) node[pos=.5,right] {1};
\draw[color=black,style=thick, fill=blue] (8,0) -- (12,0) node[pos=.5,above] {1} node[pos=.5,sloped,below] {$a_{1}$} -- (8,5) node[pos=.5,left] {3} -- (8,0)node[pos=.5,right] {2};
\draw[color=black,style=thick ] (8,5) -- (12,5)node[pos=.5,sloped,above] {$b_{1}$} -- (12,0) -- (8,5);
\end{tikzpicture}
\caption{Identifications on the boundary of $\Fe$ in the case $\Ga(k+3,3,3)^k\times(C_3)^k$.}
\label{fig:pkcf}
\end{figure}
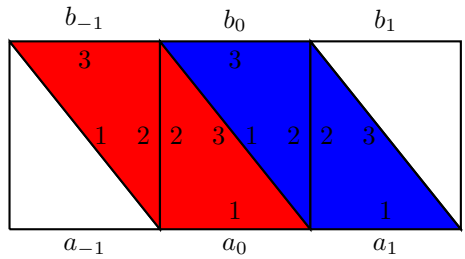


\begin{figure}[htp]
\centering
\begin{tikzpicture}[scale=0.6]
\draw[] (0,0) -- (20,0) -- (20,5) -- (0,5) -- (0,0);
\draw[color=black,style=thick] (0,0) -- (2,0) -- (0,5) -- (0,0);
\draw[color=black,style=thick] (2,0) -- (4,0)  -- (2,5)  -- (0,5) -- (2,0);
\draw[color=black,style=thick, fill=red] (4,0) --   (4,5) node[pos=.7,left] {$2$}   -- (2,5) node[pos=.5,below] {$3$} node[pos=.5,sloped,above] {$c_{-1}$}   -- (4,0)node[pos=.3,right] {$1$};
\draw[color=black,style=thick, fill=red] (4,0) -- (4,5)  node[pos=.3,right] {$2$}     -- (6,0)  node[pos=.7,left] {$3$}  -- (4,0)  node[pos=.5,above] {$1$}   node[pos=.5,sloped,below] {$a_{0}$}  ;
\draw[color=black,style=thick, fill=green] (6,0) -- (8,0)  node[pos=.5,above] {$1$}   -- (6,5)  node[pos=.5,left] {$4$}  -- (4,5)  node[pos=.5,below] {$3$}   node[pos=.5,sloped,above] {$b_{0}$} -- (6,0)  node[pos=.5,right] {$2$} ;
\draw[color=black,style=thick, fill=blue] (8,0) -- (8,5) node[pos=.7,left] {$3$}  -- (6,5) node[pos=.5,below] {$2$} node[pos=.5,sloped,above] {$c_{0}$}  -- (8,0) node[pos=.3,right] {$1$} ;
\draw[color=black,style=thick, fill=blue] (8,0)  -- (8,5)node[pos=.3,right] {$3$}  -- (10,0) node[pos=.7,left] {$2$} -- (8,0) node[pos=.5,above] {$1$} node[pos=.5,sloped,below] {$a_{1}$} ;
\draw[color=black,style=thick] (10,0) -- (12,0)  -- (10,5)  -- (8,5) -- (10,0);
\draw[color=black,style=thick] (12,0) -- (12,5)  -- (10,5) -- (12,0) ;
\draw[color=black,style=thick] (12,0)  -- (16,0) -- (16,5) -- (16,5) --(16,0)  node[pos=.5,left] {$\cdots$}  ;
\draw[color=black,style=thick] (16,0) -- (16,5)  -- (18,0) -- (16,0) ;
\draw[color=black,style=thick, fill=green] (18,0) -- (20,0) node[pos=.5,above] {$2$}   -- (18,5) node[pos=.5,left] {$3$}  -- (16,5) node[pos=.5,below] {$4$}  node[pos=.5,sloped,above] {$b_{2k+3}$}  -- (18,0) node[pos=.5,right] {$1$};
\draw[color=black,style=thick] (20,0) -- (20,5)  -- (18,5) -- (20,0);
\end{tikzpicture}
\caption{Identifications on the boundary of $\Fe$ in the case $\Ga(2k+3,3,3)^k\times(C_3)^k$.}
\label{fig:p2kc}
\end{figure}
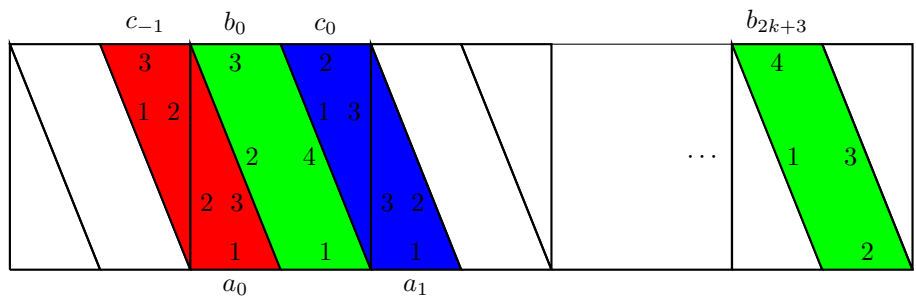

\myskip\noindent
Tables~\ref{tab:fund-k} and~\ref{tab:fund-2k} at the end of the paper show several fundamental domains from each of the infinite series $\Gamma(k+3,3,3)^k\times(C_3)^k$ and $\Gamma(2k+3,3,3)^k\times(C_3)^k$ respectively.
We can see that the combinatorial structure of the fundamental domains and their identification schemes are similar in each of the two series, only the order of the dihedral symmetry increases with~$k$.
For $\Gamma(k+3,3,3)^k\times(C_3)^k$ the fundamental polyhedron has the combinatorial structure of an anti-prism with two triangular faces sharing an edge which are then rotated around the vertical axis.
For $\Gamma(2k+3,3,3)^k\times(C_3)^k$ we see a quadrangular face to which two triangular faces are attached and all three faces are then rotated around the vertical axis.
Note that while the combinatorics of the fundamental domains of all triangle groups in~$\d$ is the same,
the combinatorics of the fundamental domains which we constructed for $\Gamma(k+3,3,3)^k\times(C_3)^k$ and $\Gamma(2k+3,3,3)^k\times(C_3)^k$ in~$\tGrp$ is different.
Hence the combinatorics of our fundamental domains shows the structure of the groups which is not apparent in their fundamental domains in the hyperbolic plane.



\section{Triangle Groups}

\label{triangles}

A triangle group of signature~$(p,q,r)$ is a group of orientation-preserving isometries of the hyperbolic plane,
generated by the rotations through $\frac{2\pi}{p}$, $\frac{2\pi}{q}$, $\frac{2\pi}{r}$
about the vertices of a hyperbolic triangle with angles $\frac{\pi}{p}$, $\frac{\pi}{q}$ and $\frac{\pi}{r}$.
All such groups are conjugate to each other and we will denote such a group $\Ga(p,q,r)$.

\myskip
The following existence result for discrete subgroups of finite level in~$\tGrp$ can be found in~\cite{Pr:diss} (section~2.8, Satz~38).

\begin{prop} 
\label{triangle-lift}
If $\gcd(k,p)=\gcd(k,q)=\gcd(k,r)=1$ and $pqr-pq-qr-rp$ is divisible by~$k$ then there exists a unique subgroup of $\tGrp$ of level~$k$, denoted $\Ga(p,q,r)^k$, such that its image under the projection to~$\text{PSU}(1,1)$ is the triangle group $\Ga(p,q,r)$.
In particular the conditions for the existence of $\Ga(p,3,3)^k$ are that $k$ is not divisible by~$3$ and $p-3$ is divisible by~$k$. 
\end{prop}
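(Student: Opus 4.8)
The plan is to reduce the proposition to a lifting problem for the triangle group, solve that problem generator by generator, and reduce the last relation to a congruence via an Euler-number computation. \textbf{Step 1: reduction to sections.} Let $C$ generate $Z(\tGrp)\cong\z$ (so $C=R_v(2\pi)$ for any $v\in\d$), let $q_k\colon\tGrp\to\tGrp^{(k)}:=\tGrp/\langle C^k\rangle$ be the quotient (the $k$-fold cover of $\PSU(1,1)$), and let $\pi_k\colon\tGrp^{(k)}\to\PSU(1,1)$ be the remaining covering, so that $\pi=\pi_k\circ q_k$ and $\ker\pi_k\cong\z/k$. If $\Ga\le\tGrp$ has level~$k$, then $\Ga\cap Z(\tGrp)$ is the index-$k$ subgroup $\langle C^k\rangle$ of $\langle C\rangle$, so $\Ga\supseteq\ker q_k$ and $\Ga=q_k^{-1}(\bGa)$ with $\bGa:=q_k(\Ga)$; the condition $\pi(\Ga)=\Ga(p,q,r)$ then says exactly that $\pi_k$ restricts to an isomorphism $\bGa\xrightarrow{\ \sim\ }\Ga(p,q,r)$, i.e.\ that $\bGa$ is the image of a group-theoretic section $\phi\colon\Ga(p,q,r)\to\tGrp^{(k)}$ of $\pi_k$. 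Conversely, any such section is injective, its image meets $\ker\pi_k$ trivially, and $\Ga:=q_k^{-1}(\phi(\Ga(p,q,r)))$ then has level exactly~$k$ with $\pi(\Ga)=\Ga(p,q,r)$; it is discrete, being contained in $\pi^{-1}(\Ga(p,q,r))$. So the proposition is equivalent to the statement that a section $\phi$ of $\pi_k$ over $\Ga(p,q,r)$ exists and is unique if and only if the stated arithmetic conditions hold.

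\textbf{Step 2: lifting the elliptic generators.} Take the standard presentation $\Ga(p,q,r)=\langle x,y,z\mid x^p=y^q=z^r=xyz=1\rangle$, with $x,y,z$ the rotations through $2\pi/p,2\pi/q,2\pi/r$ about the vertices $v_1,v_2,v_3$ of the generating triangle. By the universal property of this presentation, a section $\phi$ is the same as a triple $(g_1,g_2,g_3)$ in $\tGrp^{(k)}$, with $g_i$ a $\pi_k$-preimage of $x,y,z$ respectively, satisfying $g_1^p=g_2^q=g_3^r=g_1g_2g_3=e$. The $k$ preimages of $x$ are $q_k\!\big(R_{v_1}(2\pi/p)\,C^m\big)$, $m\in\z/k$, and since $R_{v_1}(2\pi/p)^p=R_{v_1}(2\pi)=C$,
$$\big(q_k(R_{v_1}(2\pi/p)\,C^m)\big)^p=q_k(C)^{\,1+mp},$$
which is trivial iff $pm\equiv-1\pmod k$, solvable (and then uniquely) iff $\gcd(p,k)=1$; similarly for $y$ and $z$. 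Hence $\gcd(k,p)=\gcd(k,q)=\gcd(k,r)=1$ is necessary, and when it holds the triple $(g_1,g_2,g_3)$ is uniquely determined, so $\phi$ is unique if it exists and only the relation $g_1g_2g_3=e$ remains to be checked.

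\textbf{Step 3: the Euler number and the congruence.} Choose integers $m_1,m_2,m_3$ with $pm_1\equiv qm_2\equiv rm_3\equiv-1\pmod k$ and set $W_i:=R_{v_i}(2\pi/p_i)\,C^{m_i}$ with $(p_1,p_2,p_3)=(p,q,r)$; then $g_1g_2g_3=q_k(W_1W_2W_3)$ and, using centrality of $C$,
$$W_1W_2W_3=R_{v_1}(2\pi/p)\,R_{v_2}(2\pi/q)\,R_{v_3}(2\pi/r)\cdot C^{\,m_1+m_2+m_3}.$$
The first factor projects to $xyz=1$, hence equals $C^{N}$ for an integer $N$ independent of~$k$, and the crux of the argument is that $N=1$; this is the only step that uses geometry rather than group theory and a congruence, and I expect it to be the main obstacle. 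It can be obtained from the action of $\tGrp$ on $\r=\widetilde{\partial\d}$ in which $C$ acts as the unit translation: each $R_{v_i}(2\pi/p_i)$ is conjugate to the translation by $1/p_i$ and hence moves every point strictly forward by less than~$1$, so the product moves every point forward by an amount in $(0,3)$ and $N\in\{1,2\}$; the value $N=2$ must then be ruled out by a finer estimate — reflecting that the generating triangle has area $\pi(1-1/p-1/q-1/r)<\pi$, equivalently that the orbifold Euler characteristic $-1+1/p+1/q+1/r$ lies in $(-1,0)$ — or by a direct, long but elementary, computation of the continuous lift of the product of three elliptic elements along a path from the identity in $\SL(2,\r)$. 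Granting $N=1$, we get $g_1g_2g_3=q_k(C)^{\,1+m_1+m_2+m_3}$, which is trivial iff $k\mid 1+m_1+m_2+m_3$; multiplying by the unit $pqr\bmod k$ and using $pm_1\equiv qm_2\equiv rm_3\equiv-1$ converts this into $k\mid pqr-pq-qr-rp$. Together with Step~2 this is precisely the asserted existence-and-uniqueness criterion; and for the special case $(p,3,3)$ one has $\gcd(k,q)=\gcd(k,r)=1\iff 3\nmid k$ and $pqr-pq-qr-rp=9p-3p-9-3p=3(p-3)$, so (as $3\nmid k$) divisibility by $k$ amounts to $k\mid p-3$, which in turn forces $p\equiv 3\pmod k$ and hence $\gcd(k,p)=\gcd(k,3)=1$ automatically; the conditions thus collapse to ``$3\nmid k$ and $k\mid p-3$'', as stated.
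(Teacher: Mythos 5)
The paper does not actually give a proof of this proposition; it cites Satz~38 of [Pra01], so there is no in-text argument to compare against. I will therefore assess your proposal on its own terms.

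Steps~1 and~2 are correct and cleanly organised: the identification of level-$k$ subgroups with images of sections $\phi\colon\Ga(p,q,r)\to\tGrp^{(k)}$ of the $k$-fold cover is right (including the check that $\bGa\cap\ker\pi_k$ is trivial and conversely that $q_k^{-1}(\phi(\Ga(p,q,r)))$ has level exactly $k$), and the lifting of the elliptic generators correctly isolates the conditions $\gcd(k,p_i)=1$ and pins the lifts $g_i$ down uniquely. The congruence manipulation at the end of Step~3 (multiplying $1+m_1+m_2+m_3$ by the unit $pqr$) and the specialisation to $(p,3,3)$, including the observation that $k\mid p-3$ already forces $\gcd(k,p)=1$, are also correct.

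The genuine gap is exactly where you flag it: the value $N$ in $R_{v_1}(2\pi/p)R_{v_2}(2\pi/q)R_{v_3}(2\pi/r)=C^N$. Your translation-number argument on $\r=\widetilde{\partial\d}$ correctly shows $N\in\{1,2\}$, but nothing you write rules out $N=2$. The hint that this ``reflects that the generating triangle has area $<\pi$'' is not an argument: the rotation-number quasimorphism has defect $1$, so the naive bound $|\tau(g_1g_2g_3)-\sum 1/p_i|\le 2$ still allows $N=2$, and an elliptic $R_v(\theta)$ with $\theta\in(0,\pi]$ can move points of $\r$ by amounts arbitrarily close to $1$ (the conjugating Möbius map can distort the circle badly), so there is no cheap pointwise improvement on your $(0,3)$ window. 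Establishing $N=1$ requires a real computation -- either an explicit continuous lift of the product of the three elliptics in $\widetilde{\SU(1,1)}$, or a cohomological/Euler-class argument identifying $N$ with the relevant class of the central extension restricted to the triangle group. Since $N$ is precisely the constant that turns $k\mid N+m_1+m_2+m_3$ into the stated divisibility $k\mid pqr-pq-qr-rp$, the proof is not complete without it. Everything else in your write-up is sound, and you have correctly reduced the whole proposition to this one fact, but as submitted the central step is asserted, not proved.
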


\begin{mydef}
For a Fuchsian group $\bGa\in\PSU(1,1)$,
the {\sl edge corona}~$\calE$ with respect to a point $u\in\d$ consists of all those points in $\bGa(u)\bs\{u\}$
whose Dirichlet region shares at least an edge with the Dirichlet region of~$u$,
compare~\cite{GSh}.
\end{mydef}

\noindent
Figures~\ref{fig:gamma533} and~\ref{fig:gamma733} show as an example the edge coronas
for the triangle groups $\Ga(5,3,3)$ and $\Ga(7,3,3)$ respectively.
The Dirichlet region with respect to the origin is red/dark grey,
while the union of Dirichlet regions of the points of the edge corona is green/lighter grey.

\begin{figure}[htp]
\centering
\includegraphics[width=7cm]{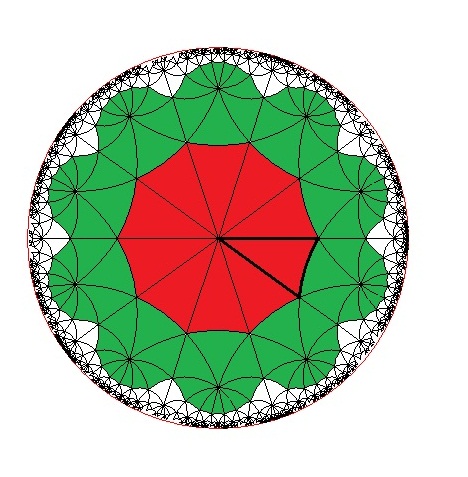}
\caption{The edge corona for $\Ga(5,3,3)$.}
\label{fig:gamma533}
\end{figure}

\begin{figure}[htp]
\includegraphics[width=7cm]{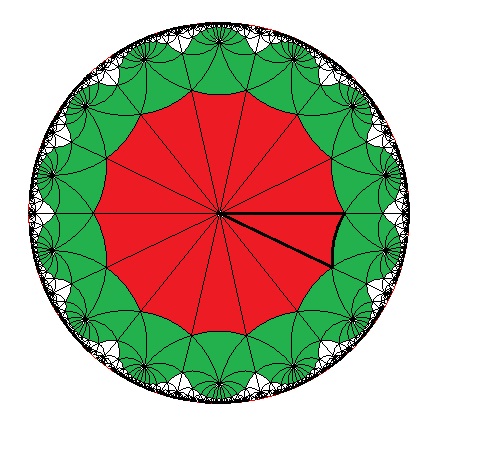}
\caption{The edge corona for $\Ga(7,3,3)$.}
\label{fig:gamma733}
\end{figure}


\myskip
We have the following description of the edge corona,
for details of the proof see Propositions~41 and~42 in~\cite{Pr:diss}:

\begin{prop}
\label{prop-corona}
Let $\Ga(p,q,r)$ be a triangle group
generated by rotations $\rho_u$, $\rho_v$, $\rho_w$ through $\frac{2\pi}{p}$, $\frac{2\pi}{q}$, $\frac{2\pi}{r}$
around the vertices $u$, $v$, $w$ of a hyperbolic triangle with angles $\frac{\pi}{p}$, $\frac{\pi}{q}$, $\frac{\pi}{r}$
respectively.
Let $L$ be the hyperbolic distance between the vertices~$u$ and~$v$.
Recall that $L$ is determined by the angles of the hyperbolic triangle
$$\cosh L=\frac{\cos\frac{\pi}{p}\cos\frac{\pi}{q}+\cos\frac{\pi}{r}}{\sin\frac{\pi}{p}\sin\frac{\pi}{q}}.$$
Then the edge corona consists of the following points of the form $x_{m,l}=(\rho^m_u\rho^l_v)(u)$:
$$\calE=\{x_{0,1},x_{0,q-1},x_{1,1},x_{1,q-1},\cdots,x_{p-1,1},x_{p-1,q-1}\}.$$
All points of the edge corona are on the circle with centre at the origin of (Euclidean) radius
$$d=\frac{\sinh L\sin\frac{\pi}{q}}{\sqrt{\sinh^2 L\sin^2\frac{\pi}{q}+1}}.$$
Moreover, the largest (Euclidean) distance between subsequent points of the edge corona on this circle is
$$s=\frac{\sinh L\sin\frac{2\pi}{q}}{\sinh^2 L\sin^2\frac{\pi}{q}+1}.$$
\end{prop}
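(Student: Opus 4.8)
## Proof proposal for Proposition~\ref{prop-corona}

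The plan is to work entirely in the hyperbolic plane with the Fuchsian triangle group $\Ga(p,q,r)$ and its Dirichlet region with respect to the vertex $u$. First I would identify the Dirichlet region $\Qu$ of $u$. Since $u$ is a vertex of the generating triangle with angle $\frac{\pi}{p}$ and the rotation $\rho_u$ of order $p$ fixes $u$, the region $\Qu$ is invariant under $\rho_u$, and the fundamental triangle together with its image under $\rho_u$ — or more precisely the union of the $2p$ copies of the "half-triangle" obtained from the standard Dirichlet-domain subdivision — must tile a neighbourhood of $u$. I would argue that $\Qu$ is a regular hyperbolic $2p$-gon (or $p$-gon, depending on the normalisation) centred at $u$, with vertices lying on the orbit of $v$ (the neighbouring triangle vertex with angle $\frac{\pi}{q}$) and edge midpoints on the orbit of $w$. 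The key point is that the perpendicular bisector between $u$ and a point $g(u)$ of the orbit contributes an edge of $\Qu$ exactly when $g(u)$ is a \emph{nearest} orbit point; the combinatorics of the $(p,q,r)$-triangulation then pins down precisely which orbit points these are.

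Next I would determine which orbit points $x$ have Dirichlet region $Q_x$ sharing an edge with $\Qu$. By equivariance, $Q_x = g(\Qu)$ whenever $g(u) = x$, so $Q_x$ and $\Qu$ share an edge iff that edge is one of the edges of $\Qu$, i.e.\ iff $x$ is one of the "neighbour" orbit points across an edge of $\Qu$. Walking around $u$: the element $\rho_u$ rotates everything by $\frac{2\pi}{p}$, so it suffices to find the neighbours across one edge and then apply powers of $\rho_u$. Across the edge adjacent to the triangle vertex $v$, reflection symmetry of the configuration (the Dirichlet construction is symmetric under the local reflection fixing the $u$–$v$ geodesic) shows that the two orbit points realising that shared edge are obtained by rotating $u$ about $v$: these are $\rho_v(u) = x_{0,1}$ and $\rho_v^{-1}(u) = \rho_v^{q-1}(u) = x_{0,q-1}$. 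Applying $\rho_u^m$ for $m = 0, 1, \dots, p-1$ yields $x_{m,1} = \rho_u^m \rho_v(u)$ and $x_{m,q-1} = \rho_u^m \rho_v^{q-1}(u)$, which is exactly the list $\calE$ claimed. I would need to check that no \emph{other} orbit points contribute shared edges — this is where the hypothesis that all three triangle angles are "small enough" (equivalently $L$ being the relevant distance) enters, ensuring the points $x_{m,l}$ with $l \neq 1, q-1$ are strictly farther from $u$ and hence do not border $\Qu$. This is the step I expect to be the main obstacle: a careful case analysis of the local picture around $u$, or a clean distance comparison showing $x_{m,1}$ and $x_{m,q-1}$ are the closest orbit points after $u$ itself.

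Finally, the metric assertions are explicit trigonometric computations. All the points $x_{m,l}$ with $l \in \{1, q-1\}$ lie on a single circle about $u$ because $\rho_v(u)$ and $\rho_v^{q-1}(u)$ are both at hyperbolic distance $L$ from... no: rather, I would compute the hyperbolic distance from $u$ to $x_{0,1} = \rho_v(u)$ using the hyperbolic law of cosines in the triangle $u, v, \rho_v(u)$, which has two sides of length $L = d_{hyp}(u,v)$ meeting at angle $\frac{2\pi}{q}$ at $v$; by the $\rho_u$-symmetry all $x_{m,l}$ are at this same distance from $u$, hence on a common circle. Converting the hyperbolic distance to the Euclidean radius in the disc model $\d$ (using $u$ as — or conjugating $u$ to — the origin) gives the stated formula for $d$ via the standard relation between $\tanh(\tfrac12 d_{hyp})$ and Euclidean radius, which after simplification with the half-angle identities produces $d = \sinh L \sin\tfrac{\pi}{q} / \sqrt{\sinh^2 L \sin^2\tfrac{\pi}{q} + 1}$. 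For the maximal gap $s$: consecutive corona points around the circle are separated either by the step $x_{m,1} \to x_{m,q-1}$ (a rotation about $v$ by $\frac{2\pi}{q} \cdot (q-2)$, equivalently the "short" arc) or by $x_{m,q-1}\to x_{m+1,1}$ (related by $\rho_u$ and symmetry); I would compute both chord lengths and check that the larger one equals $\sinh L \sin\tfrac{2\pi}{q}/(\sinh^2 L \sin^2\tfrac{\pi}{q}+1)$, again by an elementary chord-length computation on the circle of Euclidean radius $d$ together with the angle subtended at the centre $u$. These last computations are routine given the setup, so the real content is the combinatorial identification of $\calE$ in the second paragraph.
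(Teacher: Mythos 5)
The paper itself does not prove this proposition---it refers to Propositions~41 and~42 of the second author's thesis---so there is no in-paper argument to compare against, and I will assess your proposal on its own terms. Your overall strategy (describe the Dirichlet region of $u$ in the orbit $\Ga(u)$, identify which cells share an edge with it, then compute the Euclidean radius and chord) is the natural one, but your description of the Dirichlet region is incorrect and inconsistent with the very statement you are trying to prove. You assert it is a (regular) $p$-gon or $2p$-gon ``with vertices lying on the orbit of $v$ and edge midpoints on the orbit of $w$.'' If the points $\rho_u^m(w)$ were edge midpoints the cell would have only $p$ edges and hence only $p$ edge-neighbours, whereas the proposition's set $\calE$ has $2p$ distinct elements when $q,r>2$. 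The correct picture is a $2p$-gon whose vertices alternate between $\rho_u^m(v)$ (interior angle $2\pi/q$, since $q$ cells meet there) and $\rho_u^m(w)$ (interior angle $2\pi/r$); the points $\rho_u^m(w)$ are edge midpoints only when $r=2$, which is not the case in the paper's application ($r=3$).

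Beyond this, the crux---that no orbit point outside the listed $2p$ contributes an edge, i.e.\ that the $2p$-gon with vertices $\rho_u^m(v),\rho_u^m(w)$ really is the full Dirichlet region and is not cut down by bisectors of more distant orbit points---you correctly flag as ``the main obstacle,'' but you supply no argument for it; that is precisely the content of the cited thesis propositions, and leaving it acknowledged but unfilled means the statement is not proved. A smaller omission: of the two alternating chords around the circle of radius $d$ (the gap ``across $v$'' and the gap ``across $w$''), the stated formula for $s$ is the one across $v$; using the hyperbolic law of sines $\sinh L\sin\frac{\pi}{q}=\sinh L'\sin\frac{\pi}{r}$ with $L'=d(u,w)$ one finds that this is the larger of the two exactly when $q\ge r$, a comparison you should carry out explicitly rather than merely say you would ``check'' (it is vacuous in the paper's application where $q=r=3$, but the proposition is stated for general signatures).
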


\section{The Elements of the Construction}

\label{elements}

\noindent
In this section we will add some notation to the setting that we sketched in the introduction.
Let us describe the embedding of~$\tGrp$ in~$\tLrp$ and the totally geodesic subspaces~$\Eg$ in~$\tLrp$.
We consider the $4$-dimensional pseudo-Euclidean space $E^{2,2}$ of signature $(2,2)$.
We think of $E^{2,2}$ as the real vector space $\c^2\cong\r^4$ with the symmetric bilinear form
$$\<(z_1,w_1),(z_2,w_2)\>=\Re(z_1\bar z_2-w_1\bar w_2).$$
In~$E^{2,2}$ we consider the quadric
$$
  \Grp
  =\left\{a\in E^{2,2}\st\<a,a\>=-1\right\}
  =\left\{(z,w)\in E^{2,2}\st|z|^2-|w|^2=-1\right\}.
$$
We consider the cone over $\Grp$
\begin{align*}
  \Lrp
  &=\{\la\cdot a\st\la>0,~a\in\Grp\}\\
  &=\left\{a\in E^{2,2}\st\<a,a\><0\right\}
  =\left\{(z,w)\in E^{2,2}\st|z|<|w|\right\}.
\end{align*}
The bilinear form on $E^{2,2}$ induces a pseudo-Riemannian metric of signature $(2,1)$ on $\Grp$ and of signature~$(2,2)$ on $\Lrp$.

\myskip
Let $\pi:\tGrp\to\Grp$ be the universal covering.
Henceforth we identify the Lie group $\SU(1,1)$  with $\Grp$ via
$$\WZZW\mapsto(z,w),$$
and $\widetilde{\operatorname{SU}(1,1)}$ with $\tGrp$.
The bi-invariant metrics on $\Grp$ and $\tGrp$ are proportional to the Killing forms.
The covering $\tGrp\to\Grp$ extends to the universal covering $\tLrp\to\Lrp$.
The covering space $\tLrp$ inherits canonically a pseudo-Riemannian metric from~$\Lrp$.
There exist canonical trivializations $\Lrp\cong\Grp\times\r_+$  and $\tLrp\cong\tGrp\times\r_+$.

\myskip
For $g\in\tGrp$, consider the intersection with~$\Lrp$ of the affine tangent space of $\Grp$ at the point $\pi(g)$ 
$$\bE_{\pi(g)}=\{a\in\Lrp\st\<\pi(g),a\>=-1\}$$
and the intersection with~$\Lrp$ of the half-space of $\c^2$ bounded by $\bE_{\pi(g)}$ and not containing the origin
$$\bI_{\pi(g)}=\{a\in\Lrp\st\<\pi(g),a\>\le-1\}.$$
The sets $\bE_{\pi(g)}$ and $\bI_{\pi(g)}$ are simply connected and even contractible,
hence their pre-images under the covering map $\pi$ consist of infinitely many connected components, one of them containing~$g$.
Let $E_g$ and $I_g$ be those connected components of $\pi^{-1}(\bE_{\pi(g)})$ and $\pi^{-1}(\bI_{\pi(g)})$ respectively that contain~$g$.
The three-dimensional submanifold~$\Eg$ divides~$\tLrp$ into two connected components, the closure of one of which is~$\Ig$.
Let $\Hg$ be the closure of the other component of the complement of~$\Eg$ in~$\tLrp$.
The boundaries of~$\Ig$ and of~$\Hg$ are equal to~$\Eg$.

\myskip
The set $\pi((\Ee\cap\dd\Qu)^\circ)$ can be described as
$$\pi((\Ee\cap\dd\Qu)^\circ)=\left\{(z,w)\st\Re(w)=1,~|\Im(w)|<\tan\thk\right\}.$$


\section{Approximating prisms $\Qx$ by their inscribed and subscribed cylinders}

\label{prisms-cylinders}

\noindent
The following estimates are obtained by approximating the prisms~$\Qx$ via their inscribed and subscribed cylinders:

\begin{prop}
\label{in-out-cylinders}
Let $x\in\Gauou$ and $(z,w)\in\Lrp$.
Then
\begin{enumerate}[(i)]
\item
If $(z,w)\in\pi(\Qx)$ then $|w|-|z|<|w|-|x|\cdot|z|\le|w-\bar{x}z|\le\sec\thk\cdot\sqrt{1-|x|^2}$.
\item
If $|w-\bar{x}z|\le\sqrt{1-|x|^2}$ then $\pi^{-1}(z,w)\subset\Qx$.
\end{enumerate}
\end{prop}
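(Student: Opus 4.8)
The plan is to unwind the definition of the prism $\Qx$ and realise both inequalities as statements about a single inscribed, respectively circumscribed, cylinder around the axis through $\pi^{-1}(0,1)$. Recall that $\Qx$ is the intersection of the half-spaces $H_{g_1g_2}$ over all $(g_1,g_2)\in\Ga_1\times\Ga_2$ with $g_1(u)=x$; since $\Ga_2=(C_3)^k$ is cyclic and $x\neq u$, the relevant translates $\pi(g_1g_2)$ all lie on the quadric $\Grp$ and project to the point $x\in\d$ together with its rotational $\Ga_2$-orbit. A direct computation with the bilinear form shows that, under the identification of $\SU(1,1)$ with $\Grp$, an element projecting to $x$ has the form $\frac{1}{\sqrt{1-|x|^2}}(\bar x e^{i\theta},e^{i\theta})$ for suitable $\theta$, so that the condition $(z,w)\in\bI_{\pi(g)}$, i.e.\ $\<\pi(g),(z,w)\>\le -1$, reads $\Re\big(e^{-i\theta}(\bar x z - w)\big)\le -\sqrt{1-|x|^2}$. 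Taking the intersection over the relevant values of $\theta$ (which range over an arc determined by the cyclic group and the level $k$, in particular an arc of length $<\pi$ because $p=\lcm(p_1,p_2)>k$) gives that $\pi(\Qx)$ sits inside the ``slab'' condition and, after the sup over $\theta$ is taken, one arrives at a bound of the shape $|w-\bar x z|\le \sec\thk\cdot\sqrt{1-|x|^2}$; conversely if $|w-\bar x z|$ is already $\le\sqrt{1-|x|^2}$ then the inequality $\Re\big(e^{-i\theta}(\bar x z-w)\big)\le -\sqrt{1-|x|^2}$ can fail, but the point nevertheless lies in every $H_{g_1g_2}$ — here one must be careful that it is the closed half-space $H_g$ (the complement side, not $I_g$) that matters, so that the cruder Euclidean ball of radius $\sqrt{1-|x|^2}$ is \emph{subscribed} to $\Qx$. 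This is what part (ii) asserts.

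Concretely, for part (i) I would argue in three steps. First, the elementary chain $|w|-|z|<|w|-|x|\cdot|z|$ is immediate from $|x|<1$ (as $x\in\Gauou$ lies in the open disc), and $|w|-|x||z|\le |w-\bar x z|$ is the triangle inequality together with $|\bar x z|=|x||z|$. Second, I would show $|w-\bar x z|\le \sec\thk\cdot\sqrt{1-|x|^2}$ by choosing, for a given $(z,w)\in\pi(\Qx)$, the phase $\theta$ that makes $e^{-i\theta}(\bar x z-w)$ real and negative — this is permitted precisely because the set of admissible $\theta$'s for the half-spaces cutting out $\Qx$ is an arc of angular length related to $\pi k/p$, so the extremal phase is within an angle $\thk$ of an admissible one, and the half-space inequality at that admissible $\theta$ then yields $\cos\thk\cdot|w-\bar x z|\le \Re\big(e^{-i\theta_0}(\bar x z-w)\big)\le \sqrt{1-|x|^2}$ for the nearest admissible $\theta_0$. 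Rearranging gives the stated bound. The key input from the construction is the explicit description of $E_g$ and $I_g$ in section~\ref{elements}, together with the formula $\pi((\Ee\cap\dd\Qu)^\circ)=\{(z,w)\mid \Re(w)=1,\ |\Im(w)|<\tan\thk\}$, which pins down the half-angle $\thk$ appearing in the secant.

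For part (ii) I would run the argument in reverse: assuming $|w-\bar x z|\le\sqrt{1-|x|^2}$, I need $(z,w)\in H_g$ for every $g$ with $\pi(g)$ projecting to $x$ and phase in the admissible arc. For each such $g$, $\Re\big(e^{-i\theta}(\bar x z-w)\big)\ge -|w-\bar x z|\ge -\sqrt{1-|x|^2}$, which says $\<\pi(g),(z,w)\>\ge -1$, i.e.\ $(z,w)$ is on the $H$-side of $\bE_{\pi(g)}$ (or on $\bE_{\pi(g)}$ itself). Lifting this to $\tLrp$: one must check that the component of $\pi^{-1}(z,w)$ we care about lands in $H_g$ rather than in a different sheet — this is handled by the same local-triviality and monotonicity considerations used to define $H_g$ and $I_g$, and is where the hypothesis $p>k$ guarantees there is no ``wrap-around'' ambiguity. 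The main obstacle, and the step I would spend the most care on, is exactly this bookkeeping of which connected component of the pre-image lies on which side: the inequalities on $\Lrp$ are easy, but transporting them to the universal cover $\tLrp$ so that the conclusion is about $\Qx\subset\tLrp$ (not merely its projection) requires tracking the covering explicitly, and it is here that the constant $\sec\thk$ versus $1$ discrepancy between (i) and (ii) has its real origin.
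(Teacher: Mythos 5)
Your proposal takes a genuinely different route from the paper's proof, and while the broad picture is correct for part~(ii), part~(i) as you sketch it has both a sign error and an unresolved gap.

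\medskip
\noindent\textbf{Difference in approach.}
The paper proves~(ii) (and cites a reference for~(i)) by first treating the degenerate case $x=u=0$, where the hypothesis reduces to $|w|\le1$ and one checks directly that $g=(z,w)\in\bar H_a$ for every $a\in(\bGa_1)_u$, so that $\pi^{-1}(g)\subset H_a$ for every $a\in(\Ga_1)_u$ by the general covering fact $\pi^{-1}(\bar H_{\pi(a)})=\tLrp\setminus\pi^{-1}(\Int\bI_{\pi(a)})\subset H_a$. The general $x\ne u$ case is then reduced to this one by picking $g\in\Ga_1$ with $g(x)=u$, computing $|b|=(1-|x|^2)^{-1/2}$ for $\pi(g)=(a,b)$, and transporting the estimate via the equivariance $g^{-1}\cdot\Qu=\Qx$. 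No parametrization by phases $\theta$, and no $\thk$ appears in the argument for~(ii). Your proposal works directly with general $x$ by writing out the affine half-space conditions $\Re\big(e^{-i\theta}(\bar xz-w)\big)\le-\sqrt{1-|x|^2}$; for part~(ii) this works (the estimate $\Re\big(e^{-i\theta}(\bar xz-w)\big)\ge-|w-\bar xz|\ge-\sqrt{1-|x|^2}$ holds for every $\theta$, and the same covering-lifting fact finishes the argument), but it is more computational than the paper's reduction, which hides the discrete arc of $\theta$'s entirely.

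\medskip
\noindent\textbf{Issues with part~(i).}
First, a sign slip: if you write $\bar xz-w=|w-\bar xz|e^{i\psi}$ and pick $\theta_0$ within $\thk$ of $\psi+\pi$, then
$\Re\big(e^{-i\theta_0}(\bar xz-w)\big)=|w-\bar xz|\cos(\psi-\theta_0)\le-\cos\thk\cdot|w-\bar xz|$, not $\ge\cos\thk\cdot|w-\bar xz|$ as displayed; combined with the lower bound $\ge-\sqrt{1-|x|^2}$ coming from the half-space, one indeed gets $|w-\bar xz|\le\sec\thk\cdot\sqrt{1-|x|^2}$, so the logic is recoverable but the display as written is backwards. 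Second, and more seriously, the inference from $q\in H_{g_1g_2}$ (a statement in~$\tLrp$) to $\<\pi(g_1g_2),\pi(q)\>\ge-1$ (a statement in~$\Lrp$) is not automatic, since $\pi(H_{g_1g_2})=\Lrp$. One must identify those specific $g_1g_2$ for which $q$ and $g_1g_2$ lie in the same ``sheet,'' i.e.\ for which $q\in H_{g_1g_2}$ does descend to the affine inequality; this is precisely where the spacing of the admissible $\theta$'s and the hypothesis $p>k$ enter, and you flag it as ``the step I would spend the most care on'' without carrying it out. That is a real gap, not merely an omitted routine verification: it is the heart of why the constant in~(i) is $\sec\thk$ rather than~$1$. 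Finally, a minor point: the element of $\Grp$ carrying $u=0$ to $x$ is $\frac{1}{\sqrt{1-|x|^2}}(xe^{i\theta},e^{i\theta})$, not $\frac{1}{\sqrt{1-|x|^2}}(\bar xe^{i\theta},e^{i\theta})$; your resulting bilinear-form inequality happens to match the statement's $\bar xz$, but the parametrization as written is off by a conjugation.
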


\begin{proof}
For the proof of~(i) see Lemma~1(i) in~\cite{Pr:qgor-fund}. The same idea works for~(ii).

Let us first consider the case $x=u=0$.
In this case the inequality in~(ii) reduces to $|w|\le1$.
Let~$g=(z,w)\in\Lrp$ with~$|w|\le1$.
Then we have~$g\in\bar H_a$ for all~$a\in(\bGa_1)_u$ and therefore~$\pi^{-1}(g)\subset H_a$ for all~$a\in(\Ga_1)_u$.
Thus~$\pi^{-1}(g)\subset\Qu$.

In the general case $x\in\Ga_1(u)\bs\{u\}$,
let $g\in\Ga_1$ be an element such that $g(x)=u$ and let $\pi(g)=(a,b)$.
The element $(a,b)\in\Grp$ acts on~$\d$ by
$$(a,b)\cdot x=\frac{\bar b x+a}{\bar a x+b}.$$
The property $(a,b)\cdot x=u=0$ implies $a=-\bar bx$.
From $(a,b)\in\Grp$ we conclude
$$-1=|a|^2-|b|^2=|-\bar b x|^2-|b|^2=-|b|^2\cdot(1-|x|^2)$$
and hence $|b|=(1-|x|^2)^{-1/2}$.
Let us consider $(z,w)$ with $|w-\bar{x}z|\le\sqrt{1-|x|^2}$.
Let $(z',w')=\pi(g)\cdot(z,w)$.
Then
$$
  |w'|
  =|\bar az+b w|
  =|-b\bar xz+bw|
  =\frac{|w-\bar{x}z|}{\sqrt{1-|x|^2}}
  \le1,
$$
hence $\pi^{-1}(z',w')\subset\Qu$
and
$$
  \pi^{-1}(z,w)
  =\pi^{-1}(\pi(g)^{-1}\cdot(z',w'))
  =g^{-1}\cdot\pi^{-1}(z',w')
  \subset g^{-1}\cdot\Qu
  =\Qx.\qedhere
$$ 
\end{proof}

\section{Reduction of the description of $\Fe$}

\label{sec:neclace}

\noindent
The interior of the fundamental domain~$\Fe$ is
$$\Fe^{\circ}=(\Ee\cap\dd\Qu)^\circ-\bigcup_{x\in\Gauou}\Qx.$$
The aim of this section is to show that in the case $\Ga_1=\Ga(p_1,q,r)^k$ and $\Ga_2=(C_{p_2})^k$ with $p=\lcm(p_1,p_2)\ge kq/2$
it is sufficient to consider the prisms~$\Qx$ with $x$ in the edge corona~$\calE$, i.e.\ $\Fe^{\circ}=\FE$, where
$$\FE=(\Ee\cap\dd\Qu)^\circ-\bigcup_{x\in{\calE}}\Qx.$$
We will separate the set $\pi(\FE)$ from the sets $\pi(\Qx)$ for sufficiently large~$|x|$ by enclosing them within cylinders.
We will first describe a cylinder around~$\pi(\FE)$.
To this end we are going to derive an estimate for the distance from the vertical axis to the points in $\pi(\FE)$
by approximating the sets $\pi(Q_x)$ through their inscribed cylinders.

\begin{prop}
\label{prop:5}
Let the functions $\ellE^{\pm}$ be defined as
$$\ellE^{\pm}(t)=\frac{1}{\tanh L}\pm\frac{1}{\sinh L\cdot\sin\frac{\pi}{q}}\cdot\sqrt{\frac{1}{t^2}-\cos^2\frac{\pi}{q}},$$
where $L$ is the distance defined in Proposition~\ref{prop-corona}.
Suppose that
$$\ellE^{-}\left(\sec\thk\right)\le1.$$
Then for any $(z,w)\in\pi((\Ee\cap\dd\Qu)^\circ)$
$$(z,w)\in\pi(\FE)\Longrightarrow|z|<|w|\cdot\ellE^{-}(|w|).$$
\end{prop}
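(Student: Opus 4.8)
The goal is to bound $|z|$ from above for points $(z,w)\in\pi(\FE)$ lying in $\pi((\Ee\cap\dd\Qu)^\circ)$. The natural strategy is contrapositive combined with Proposition~\ref{in-out-cylinders}(ii): if $|z|\ge|w|\cdot\ellE^{-}(|w|)$, I want to produce a point $x\in\calE$ of the edge corona with $\pi^{-1}(z,w)\subset\Qx$, which by definition of $\FE$ forces $(z,w)\notin\pi(\FE)$. By Proposition~\ref{in-out-cylinders}(ii) it suffices to find such an $x$ with $|w-\bar x z|\le\sqrt{1-|x|^2}$. Since all points of the edge corona lie on the circle of (Euclidean) radius $d$ computed in Proposition~\ref{prop-corona}, I would write $x=d\,e^{i\varphi}$ and try to choose the angle $\varphi$ to minimise $|w-\bar x z|$; the ``inscribed cylinder'' heuristic mentioned in the section header is exactly this: the union $\bigcup_{x\in\calE}\Qx$ contains the solid cylinder (in the $(z,w)$-coordinates) swept out by the balls $|w-\bar x z|\le\sqrt{1-d^2}$ as $x$ ranges over the corona circle, provided consecutive corona points are close enough — which is where the hypothesis $p\ge kq/2$ (equivalently the spacing bound $s$ from Proposition~\ref{prop-corona}) will enter.

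The computational core is to evaluate $\min_\varphi|w-d e^{-i\varphi}z|$. Writing $z=|z|e^{i\alpha}$, $w=|w|e^{i\beta}$, the quantity $|w-\bar x z|^2=|w|^2+d^2|z|^2-2d|w||z|\cos(\beta-\alpha+\varphi)$ is minimised, over a \emph{continuum} of $\varphi$, at value $\big(|w|-d|z|\big)^2$ when $d|z|\le|w|$, and I would aim to show that the relevant minimiser $\varphi$ can be taken to correspond to an actual corona point (up to the allowed discretisation error controlled by $s$). Thus I need $\big(|w|-d|z|\big)^2\le 1-d^2$, i.e. $|w|-d|z|\le\sqrt{1-d^2}$. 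Solving this for $|z|$ gives $|z|\ge\dfrac{|w|-\sqrt{1-d^2}}{d}$. The remaining task is purely algebraic: substitute $d=\dfrac{\sinh L\sin\frac\pi q}{\sqrt{\sinh^2 L\sin^2\frac\pi q+1}}$ from Proposition~\ref{prop-corona}, simplify $\sqrt{1-d^2}=\dfrac{1}{\sqrt{\sinh^2 L\sin^2\frac\pi q+1}}$, and check that
$$\frac{|w|-\sqrt{1-d^2}}{d}=|w|\cdot\ellE^{-}(|w|)$$
after using $\pi((\Ee\cap\dd\Qu)^\circ)=\{(z,w):\Re(w)=1,\ |\Im(w)|<\tan\thk\}$, which forces $|w|\le\sec\thk$ and hence lets the hypothesis $\ellE^{-}(\sec\thk)\le1$ guarantee $\ellE^{-}(|w|)\le1$ so the chosen $x$ indeed satisfies $|x|=d<1$ and $d|z|<|w|$, keeping the minimisation valid. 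The identity $\frac{1}{\tanh L}-\frac{1}{\sinh L\sin\frac\pi q}\sqrt{\frac{1}{|w|^2}-\cos^2\frac\pi q}=\frac{|w|-\sqrt{1-d^2}}{d|w|}$ is the key elementary check; it reduces, after clearing denominators, to $\cosh L\cdot|w|-\sqrt{\sinh^2 L\sin^2\frac\pi q+1}=\sinh L\sin\frac\pi q\cdot|w|\cdot\big(\tfrac{1}{\tanh L}-\ellE^{-}(|w|)\big)$ — a one-line manipulation using $\sinh^2 L-\cosh^2 L=-1$.

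The step I expect to be the genuine obstacle is \textbf{not} the algebra but justifying that minimising over the full corona circle is legitimate, i.e. that one may actually choose a corona \emph{point} $x=x_{m,l}\in\calE$ whose angle is close enough to the optimal $\varphi$ that the inequality $|w-\bar x z|\le\sqrt{1-d^2}$ still holds with the \emph{slightly larger} radius $\sqrt{1-|x|^2}=\sqrt{1-d^2}$ — here the uniform spacing bound $s$ from Proposition~\ref{prop-corona} together with the hypothesis relating $p$, $k$, $q$ must be invoked to control the perturbation $\cos(\beta-\alpha+\varphi_0)-\cos(\beta-\alpha+\varphi)$ where $|\varphi_0-\varphi|\le s/(2d)$ roughly. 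Equivalently, one shows the inscribed cylinders of the corona prisms \emph{overlap} along the circle, so their union contains a full solid cylinder; I would phrase this as a lemma bounding the gap in terms of $s$ and then absorb it. Once that geometric covering statement is in hand, the implication in Proposition~\ref{prop:5} follows immediately from Proposition~\ref{in-out-cylinders}(ii) as above.
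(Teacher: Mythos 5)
Your plan follows the same inscribed--cylinder strategy as the paper, but the algebraic claim at its core is false, and the error is not the ``discretisation'' issue you flagged — it is a miscomputation of where the inner boundary of the covered region actually sits.

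You minimise $|w-\bar x z|$ over the \emph{whole} corona circle $x=d\,e^{i\varphi}$ and obtain $(|w|-d|z|)^2$, so you are claiming that the union $\bigcup_{x\in\calE}B\!\left(w/\bar x,\sqrt{1-d^2}/d\right)$ contains the full annulus
$$\frac{|w|-\sqrt{1-d^2}}{d}\;\le\;|z|\;\le\;\frac{|w|+\sqrt{1-d^2}}{d},$$
and then assert that the lower endpoint equals $|w|\,\ellE^{-}(|w|)$. Both halves fail. First, the identity $(|w|-\sqrt{1-d^2})/d=|w|\,\ellE^{-}(|w|)$ does not hold: at $|w|=1$ it reads $\dfrac{\sqrt{\sinh^2L\sin^2\frac{\pi}{q}+1}-1}{\sinh L\sin\frac{\pi}{q}}\overset{?}{=}\dfrac{\cosh L-1}{\sinh L}$, which is true only when $\sin\frac{\pi}{q}=1$; for $q=3$, $\sinh L=1$ the two sides are roughly $0.373$ and $0.414$, so they differ. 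Second, and this explains the discrepancy, the corona consists of finitely many points, so the union of disks is a necklace, not an annulus. Along the ray bisecting two neighbouring centres $z_i,z_{i+1}$ the coverage is thinnest: the ray meets the union exactly on the segment between the two intersection points of the circles $\partial B(z_i,r)$ and $\partial B(z_{i+1},r)$, whose distances from the origin are $\sqrt{\hat d^2-\hat s^2}\pm\sqrt{r^2-\hat s^2}$ with $\hat d=|w|/d$, $\hat s=|w|s/(2d^2)$, $r=\sqrt{1-d^2}/d$. This is strictly inside the tangent annulus you wrote down whenever $\hat s>0$. Substituting the formulas for $d$ and $s$ from Proposition~\ref{prop-corona} turns $\sqrt{\hat d^2-\hat s^2}\pm\sqrt{r^2-\hat s^2}$ into $|w|\,\ellE^{\pm}(|w|)$, which is exactly how the paper derives the bound.

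So the ``perturbation lemma'' you deferred cannot be absorbed as an $\varepsilon$-correction: passing from the continuum of angles to the discrete corona changes the \emph{formula} for the inner radius, not merely its value by a small error. The right way to organise the argument is the paper's: verify $\hat s\le r$ (so consecutive disks overlap, using $\thk\le\pi/q$), then read off the inner and outer radii of the guaranteed annulus from the chord of intersection of two neighbouring circles and identify them with $|w|\,\ellE^{\mp}(|w|)$. The hypothesis $\ellE^{-}(\sec\thk)\le1$ then ensures the relevant $z$ satisfy $|z|<|w|$ as needed to invoke Proposition~\ref{in-out-cylinders}(ii), and monotonicity of $\ellE^{\pm}$ lets one pass from the endpoint $t=\sec\thk$ to all $|w|\in[1,\sec\thk]$.
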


\begin{proof}
Proposition~\ref{prop-corona} says that the points of the edge corona $\calE$ are all on a circle.
Let $x_1,\cdots,x_m$ be the points of $\calE$ in the anti-clockwise direction.
Recall that
$$
  |x_i|=d=\frac{\sinh L\sin\frac{\pi}{q}}{\sqrt{\sinh^2 L\sin^2\frac{\pi}{q}+1}}
  \quad\text{and}\quad
  \max|x_i-x_{i+1}|=s=\frac{\sinh L\sin\frac{2\pi}{q}}{\sinh^2 L\sin^2\frac{\pi}{q}+1}.
$$
Let $z_i=w/\bar x_i$ and $r=\sqrt{1-d^2}/d$.
Proposition~\ref{in-out-cylinders}(ii) implies that $(z,w)\in\pi(Q_x)$ for any $z\in B(z_i,r)$ with $|z|<|w|$.
Let $\hat{d}=|z_i|=|\frac{w}{x_i}|=\frac{|w|}{d}$ and
\begin{eqnarray*}
  2\hat{s}
  =|z_i-z_{i+1}|
  &=&\left|\frac{w}{\bar{x}_i}-\frac{w}{\bar{x}_{i+1}}\right|
  =|w|\cdot\left|\frac{\bar{x}_{i+1}-\bar{x_i}}{\bar{x_i}\bar{x}_{i+1}}\right|
  =|w|\cdot\frac{s}{d^2}.
\end{eqnarray*}
We want to show that $\hat{s}\le r$, i.e.\ that the neighbouring disks $B(z_i,r)$ and $B(z_{i+1},r)$ intersect. 
Straightforward computation shows that
$$
  \hat{s}=|w|\cdot\frac{s}{2d^2}=|w|\cdot\frac{\cos\frac{\pi}{q}}{\sinh L\sin\frac{\pi}{q}}
  \quad\text{and}\quad
  r=\frac{\sqrt{1-d^2}}{d}=\frac{1}{\sinh L\sin\frac{\pi}{q}}.
$$
We know that $(z,w)\in\pi((\Ee\cap\dd\Qu)^\circ)$, hence $\Re(w)=1$ and $|\Im(w)|\le\tan\thk$.
Then,
$$|w|=\sqrt{\Re(w)^2+\Im(w)^2}\le\sqrt{1+\tan^2\thk}=\sec\thk\le\sec\frac{\pi}{q}$$
since $\thk\le\frac{\pi}{q}$.
Therefore
$$\hat{s}=|w|\cdot\frac{\cos\frac{\pi}{q}}{\sinh L\sin\frac{\pi}{q}}\le\frac{1}{\sinh L\sin\frac{\pi}{q}}=r.$$

\begin{figure}[h]
  \begin{center}
  \leavevmode
  \setcoordinatesystem units <0.5cm,0.5cm> 
  \startrotation by 0 -1
  \plot -3 0  1.5 2  3 0  1.5 -2  -3 0 /
  \plot 0 0  1.5 2  1.5 -2  0 0 /
  \plot -3 0  3 0 /
  \put {$\scriptstyle r$} [rb] <0pt,0pt> at 0.5 -1
  \put {$\scriptstyle \hat d$} [r] <0pt,0pt> at -1.5 -1
  \put {$\scriptstyle \hat s$} [t] <3pt,-3pt> at 1.5 -1
  \put {$\scriptstyle z_1$} [r] <-5pt,0pt> at 1.5 -2
  \put {$\scriptstyle z_2$} [l] <5pt,0pt> at 1.5 2
  \circulararc 360 degrees from -1 2 center at 1.5 2
  \circulararc -360 degrees from -1 -2 center at 1.5 -2
  \multiput {\phantom{$\bullet$}} at -3 4.5 4 -4.5 /
  \stoprotation
  \end{center}
\caption{Inscribed cylinders}
\label{figure-neclace}
\end{figure}
Consider the circles of radius~$r$ with centers at~$z_i$ and~$z_{i+1}$.
The distance between their centres is at most $2\hat{s}\le2r$, hence the circles intersect at two points,
see Figure~\ref{figure-neclace}.
The distances of these points to the origin are
\begin{align*}
  &\sqrt{\hat{d}^2-\hat{s}^2}\pm\sqrt{r^2-\hat{s}^2}\\
  &=\sqrt{\frac{|w|^2}{d^2}-\frac{s^2 |w|^2}{4d^4}}\pm\sqrt{\frac{1-d^2}{d^2}-\frac{s^2|w|^2}{4d^4}}\\
  &=|w|\cdot\left(\frac{\sqrt{4d^2-s^2}}{2d^2}\pm\frac{1}{2d^2}\cdot\sqrt{\frac{4d^2(1-d^2)}{t^2}-s^2}\right)\\
  &=|w|\cdot\left(\frac{1}{\tanh L}\pm\frac{1}{\sinh L\cdot\sin\frac{\pi}{q}}\cdot\sqrt{\frac{1}{t^2}-\cos^2\frac{\pi}{q}}\right)\\
  &=|w|\cdot\ellE^{\pm}(|w|).
\end{align*}
This means that any point~$z\in\c$ with $\frac{|z|}{|w|}$ between $\ellE^{-}(|w|)$ and $\ellE^{+}(|w|)$
is contained in the union of the disks $B(z_i,r)$.
Functions $\ellE^-$ and $\ellE^+$ are monotone increasing and decreasing respectively,
therefore any point~$z$ with $\frac{|z|}{|w|}$ between $\ellE^{-}\left(\sec\thk\right)$ and $\ellE^{+}\left(\sec\thk\right)$
is contained in the union of the disks $B(z_i,r)$ and hence in $\bigcup_{x\in\calE}\pi(Q_x)$.
It follows that $|z|<|w|\cdot\ellE^-(|w|)$ for any point $(z,w)$ in~$\pi(\FE)$.
\end{proof}

\myskip
We have now enclosed $\FE$ within a cylinder, it remains to estimate the position of~$\Qx$ for sufficiently large~$|x|$.

\begin{prop}
\label{prop:4.10}
Let the function~$f$ be defined as
$$f(s,t)=\frac{1}{s}-\frac{\sec\thk}{t}\cdot\frac{\sqrt{1-s^2}}{s}.$$
Let $x$ be a point in $\Ga_1(u)\bs\{u\}$ such that $|x|\ge R>0$.
Then for any $(z,w)\in\pi((\Ee\cap\dd\Qu)^\circ)$
$$(z,w)\in\pi(Q_x)\Longrightarrow|z|\ge|w|\cdot f(R,|w|).$$
\end{prop}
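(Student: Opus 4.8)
The plan is to mirror the argument of Proposition~\ref{in-out-cylinders}(i) but in the reverse direction, producing a \emph{lower} bound for $|z|/|w|$ rather than an upper bound, and to package the outcome in terms of the single real parameter $R$ bounding $|x|$ from below. First I would recall that by Proposition~\ref{in-out-cylinders}(i), if $(z,w)\in\pi(Q_x)$ then
$$|w-\bar x z|\le\sec\thk\cdot\sqrt{1-|x|^2}.$$
The idea is to transform this disk condition on $z$ (a ball of radius $\sec\thk\cdot\sqrt{1-|x|^2}$ around the point $w/\bar x$) into a bound on $|z|$ via the reverse triangle inequality: $|z|\ge|w/\bar x|-\sec\thk\cdot\sqrt{1-|x|^2}/|x|$, using that $|w-\bar x z|=|x|\cdot|z-w/\bar x|$. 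This gives
$$|z|\ge\frac{|w|}{|x|}-\sec\thk\cdot\frac{\sqrt{1-|x|^2}}{|x|}=|w|\cdot\left(\frac{1}{|x|}-\frac{\sec\thk}{|w|}\cdot\frac{\sqrt{1-|x|^2}}{|x|}\right)=|w|\cdot f(|x|,|w|).$$

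The second step is to pass from $f(|x|,|w|)$ to $f(R,|w|)$ using the hypothesis $|x|\ge R$. Here I would check monotonicity of $s\mapsto f(s,t)$ in $s$ on the relevant range $(0,1)$: writing $f(s,t)=\tfrac{1}{s}(1-\tfrac{\sec\thk}{t}\sqrt{1-s^2})$, both factors $\tfrac1s$ and $(1-\tfrac{\sec\thk}{t}\sqrt{1-s^2})$ behave monotonically as $s$ increases — $\tfrac1s$ decreases while $\sqrt{1-s^2}$ decreases so the bracket increases — so one must be a little careful. The cleanest route is to observe that we want $f(|x|,|w|)\ge f(R,|w|)$, i.e.\ we need $f(\cdot,t)$ to be \emph{decreasing} in its first argument; differentiating, $\partial_s f=-\tfrac{1}{s^2}+\tfrac{\sec\thk}{t}\cdot\tfrac{1}{s^2\sqrt{1-s^2}}$, which is $\le0$ exactly when $\sec\thk\le t\sqrt{1-s^2}$. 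Since on $\pi((\Ee\cap\dd\Qu)^\circ)$ we have $|w|=\sec\thk$ (as $\Re w=1$ forces $|w|^2=1+\Im(w)^2$, so actually $|w|\le\sec\thk$ with equality only on the boundary), I expect the intended regime is precisely where this sign condition can fail, so the correct reading is that $f$ is the relevant extremal value and one simply substitutes the worst-case $|x|=R$ into the already-established inequality $|z|\ge|w|\cdot f(|x|,|w|)$ after verifying $f(\cdot,|w|)$ is monotone decreasing on $[R,1)$ — this is the one genuine calculation, and I would phrase it as a short lemma on the sign of $\partial_s f$.

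Putting the pieces together: from $(z,w)\in\pi(Q_x)$ we get $|z|\ge|w|f(|x|,|w|)$ by the reverse triangle inequality applied to Proposition~\ref{in-out-cylinders}(i), and then $|z|\ge|w|f(R,|w|)$ by monotonicity in the first slot together with $|x|\ge R$. The main obstacle, as anticipated, is the monotonicity step: one has to pin down on which range of $(s,t)$ the function $f(\cdot,t)$ is decreasing, and confirm that the points $(z,w)\in\pi((\Ee\cap\dd\Qu)^\circ)$ together with the points $x$ appearing in the application (those with $|x|\ge R$ where $R$ will in practice be taken related to the edge-corona radius $d$) all lie in that good range; everything else is the reverse triangle inequality and the computation $|w-\bar x z|=|x|\,|z-w/\bar x|$. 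I would also note at the end that the inequality is vacuous unless $f(R,|w|)>0$, which is the only case of interest when combining this with Proposition~\ref{prop:5} to separate $\pi(\FE)$ from $\pi(Q_x)$.
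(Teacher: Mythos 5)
Your first step — turning the chain $|w|-|x||z|\le|w-\bar x z|\le\sec\thk\sqrt{1-|x|^2}$ from Proposition~\ref{in-out-cylinders}(i) into the lower bound $|z|\ge|w|\cdot f(|x|,|w|)$ — is exactly what the paper does, and your derivative
$$\frac{\partial f}{\partial s}=\frac{1}{s^2}\left(\frac{\sec\thk}{t\,\sqrt{1-s^2}}-1\right)=\frac{\frac{\sec\thk}{t}-\sqrt{1-s^2}}{s^2\sqrt{1-s^2}}$$
is also correct. The gap is in the monotonicity logic, where you have the direction reversed. Since $|x|\ge R$ and you want $f(|x|,|w|)\ge f(R,|w|)$, you need $f(\cdot,t)$ to be \emph{increasing} in its first argument, not decreasing: a decreasing $f$ with $|x|\ge R$ would give the useless inequality $f(|x|,|w|)\le f(R,|w|)$. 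This mix-up is what produces your subsequent confusion (``the sign condition can fail,'' ``the intended regime is precisely where \ldots''): there is no tension to resolve, because the computation you did already hands you the right sign. On $\pi((\Ee\cap\dd\Qu)^\circ)$ one has $\Re w=1$ and $|\Im w|\le\tan\thk$, so $t=|w|\in[1,\sec\thk]$, whence $\sec\thk/t\ge1>\sqrt{1-s^2}$ for all $s\in(0,1)$, giving $\partial_s f\ge0$ throughout the relevant range. That is, $f(\cdot,t)$ is monotone increasing there, so $|x|\ge R$ immediately yields $f(|x|,|w|)\ge f(R,|w|)$ and hence $|z|\ge|w|\cdot f(R,|w|)$, exactly as in the paper. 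The remainder of your proposal (the reverse triangle inequality, the identity $|w-\bar x z|=|x|\,|z-w/\bar x|$, the remark about vacuity when $f(R,|w|)\le0$) is fine; once you flip ``decreasing'' to ``increasing'' and drop the hand-wringing about the sign condition, the argument closes.
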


\begin{proof}
Let $(z,w)$ be a point in $\pi(Q_x)$.
Proposition~\ref{in-out-cylinders}(i) implies that
$$|w|-|x|\cdot|z|\le|w-x\bar{z}|\le\sec\thk\cdot\sqrt{1-|x|^2},$$
hence
$$|z|\ge\frac{|w|}{|x|}-\frac{\sec\thk\sqrt{1-|x|^2}}{|x|}=|w|\cdot f(|x|,|w|).$$
Note that for $s\in(0,1)$ and $t\in\left[1,\sec\thk\right]$ we have
$$\frac{\partial f}{\partial s}=\frac{\frac{\sec\thk}{t}-\sqrt{1-s^2}}{s^2\sqrt{1-s^2}}\ge0,$$
hence $f(\cdot,t)$ is monotone increasing for $t\in\left[1,\sec\thk\right]$.

\myskip
For $(z,w)\in\pi((\Ee\cap\dd\Qu)^\circ)\cap\pi(Q_x)$ we have $|w|\in\left[1,\sec\thk\right]$ and therefore $f(|x|,|w|)>f(R,|w|)$ and
$$|z|\ge|w|\cdot f(|x|,|w|)>|w|\cdot f(R,|w|).\qedhere$$
\end{proof}

In Proposition~\ref{prop:5} we found an upper bound on the distance from the vertical axis to the points of the set $\pi(\FE)$.
On the other hand, in proposition~\ref{prop:4.10} we provide a lower bound for the distance from the vertical axis
to the points of the set $\pi(Q_x)$.
Combining these two estimates we can show under certain conditions that the sets $\pi(Q_x)$ with $x\not\in\calE\cup\{u\}$
share no points with $\pi(\FE)$ and therefore with $\pi(F_{{e}})\subset\pi(\FE)$.
That would mean $\pi(F_e)=\pi(\FE)$ and hence $F_e=\FE$.\\

\begin{thm}

\label{prop:7.1}

Let $R\ge\tanh L$ be such that
$$\{x\in\Ga_1(u)\st|x|<R\}=\calE\cup\{u\}$$
and
$$\ellE^{-}\left(\sec\thk\right)\le\frac{1-\sqrt{1-R^2}}{R}.$$
Then
$$\Fe^\circ=\FE.$$
\end{thm}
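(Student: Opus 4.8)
The plan is to prove the non-trivial inclusion $\FE\subseteq\Fe^\circ$; the opposite inclusion $\Fe^\circ\subseteq\FE$ is automatic, since $\calE\subseteq\Gauou$ forces $\bigcup_{x\in\calE}\Qx\subseteq\bigcup_{x\in\Gauou}\Qx$. Comparing
$\Fe^{\circ}=(\Ee\cap\dd\Qu)^\circ-\bigcup_{x\in\Gauou}\Qx$ with $\FE=(\Ee\cap\dd\Qu)^\circ-\bigcup_{x\in\calE}\Qx$,
we see it suffices to show $\Qx\cap\FE=\emptyset$ for every $x\in\Gauou\setminus\calE$. Since $\pi(\Qx\cap\FE)\subseteq\pi(\Qx)\cap\pi(\FE)$, it is enough to prove $\pi(\Qx)\cap\pi(\FE)=\emptyset$ for such $x$; and by the first hypothesis $\{x\in\Ga_1(u):|x|<R\}=\calE\cup\{u\}$ (which in particular forces $R<1$, as $\Ga_1(u)$ is infinite), every such $x$ satisfies $|x|\ge R$.

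Assume, for contradiction, that $(z,w)\in\pi(\FE)\cap\pi(\Qx)$ with $|x|\ge R$. Since $\FE\subseteq(\Ee\cap\dd\Qu)^\circ$, the description of $\pi((\Ee\cap\dd\Qu)^\circ)$ gives $\Re(w)=1$ and $|\Im(w)|<\tan\thk$, so $|w|\in[1,\sec\thk)$. The second hypothesis yields $\ellE^{-}(\sec\thk)\le\frac{1-\sqrt{1-R^2}}{R}\le1$ (the last step because $R<1$), so Proposition~\ref{prop:5} applies and gives $|z|<|w|\cdot\ellE^{-}(|w|)$; and Proposition~\ref{prop:4.10}, applied with this $R$, gives $|z|\ge|w|\cdot f(R,|w|)$. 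Hence $f(R,|w|)<\ellE^{-}(|w|)$. The proof will therefore be finished once we establish that $f(R,t)\ge\ellE^{-}(t)$ for all $t\in[1,\sec\thk]$, as this contradicts the previous line taken with $t=|w|$.

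To prove this inequality I would substitute $u=1/t$ and study $\psi(u):=f(R,1/u)-\ellE^{-}(1/u)$ on the interval $[\cos\thk,1]$, which is contained in the domain $u\ge\cos\frac{\pi}{q}$ of $\ellE^{-}(1/\cdot)$ thanks to the standing assumption $\thk\le\frac{\pi}{q}$. Here $f(R,1/u)$ is affine in $u$, while $-\ellE^{-}(1/u)$ is concave, since
$$\frac{d^2}{du^2}\bigl(-\ellE^{-}(1/u)\bigr)=-\frac{\cos^2\frac{\pi}{q}}{\sinh L\cdot\sin\frac{\pi}{q}\cdot\bigl(u^2-\cos^2\frac{\pi}{q}\bigr)^{3/2}}<0.$$
Thus $\psi$ is concave and attains its minimum on $[\cos\thk,1]$ at an endpoint, so it suffices to verify $f(R,t)\ge\ellE^{-}(t)$ at the two values $t=\sec\thk$ and $t=1$. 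At $t=\sec\thk$ one computes directly $f(R,\sec\thk)=\frac1R-\frac{\sqrt{1-R^2}}{R}=\frac{1-\sqrt{1-R^2}}{R}$, and the inequality $f(R,\sec\thk)\ge\ellE^{-}(\sec\thk)$ is precisely the second hypothesis of the theorem. At $t=1$ one computes $\ellE^{-}(1)=\frac{1}{\tanh L}-\frac{1}{\sinh L}=\frac{\cosh L-1}{\sinh L}=\tanh\tfrac{L}{2}$, so what must be shown is $\frac{1-\sec\thk\cdot\sqrt{1-R^2}}{R}\ge\tanh\tfrac{L}{2}$.

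I expect this last inequality at $t=1$ to be the only real obstacle: it is where the assumption $R\ge\tanh L$ is genuinely used (writing $R=\sin\phi$ turns the left-hand side into an expression of $\tan(\phi/2)$-type), together with $\thk\le\frac{\pi}{q}$ to keep the factor $\sec\thk$ under control so that $\psi$ remains monotone on $[\cos\thk,1]$. Everything else — the reduction to the projected picture, the invocations of Propositions~\ref{prop:5} and~\ref{prop:4.10}, checking the hypothesis of Proposition~\ref{prop:5}, and the concavity computation — is routine. Once the two endpoint inequalities are in hand, concavity delivers $f(R,|w|)\ge\ellE^{-}(|w|)$, contradicting $f(R,|w|)<\ellE^{-}(|w|)$; hence $\pi(\Qx)\cap\pi(\FE)=\emptyset$ for every $x\in\Gauou\setminus\calE$, and therefore $\Fe^\circ=\FE$.
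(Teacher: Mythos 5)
Your reduction to showing $f(R,t)\ge\ellE^{-}(t)$ for $t\in[1,\sec\thk]$, via Propositions~\ref{prop:5} and~\ref{prop:4.10} and the observation $|w|\in[1,\sec\thk)$ on $\pi((\Ee\cap\dd\Qu)^\circ)$, is exactly the paper's reduction. The two arguments diverge only at the final inequality, and that is where your proposal leaves a genuine gap.

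The paper expresses $f(R,t)-\ellE^{-}(t)$ in the form displayed before Theorem~\ref{prop:7.1} and asserts it is \emph{monotone decreasing} on $\left[1,\sec\frac{\pi}{q}\right]$; the minimum on $[1,\sec\thk]$ is therefore attained at $t=\sec\thk$, where the displayed hypothesis $\ellE^{-}(\sec\thk)\le\frac{1-\sqrt{1-R^2}}{R}=f(R,\sec\thk)$ finishes the proof in a single step. You instead substitute $u=1/t$ and observe that $\psi(u)=f(R,1/u)-\ellE^{-}(1/u)$ is concave on $[\cos\thk,1]$, because $f(R,1/u)$ is affine in $u$ and $-\ellE^-(1/u)$ is a positive multiple of the concave function $\sqrt{u^2-\cos^2\frac{\pi}{q}}$ plus a constant; this computation is correct. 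But concavity only locates the minimum at \emph{one of the two} endpoints, so you must verify $\psi\ge0$ at both $u=\cos\thk$ and $u=1$. The first is the theorem's hypothesis, exactly as in the paper. The second, $f(R,1)\ge\ellE^-(1)=\tanh\frac{L}{2}$, i.e.\ $\frac{1-\sec\thk\sqrt{1-R^2}}{R}\ge\tanh\frac{L}{2}$, you explicitly leave open ("I expect this last inequality at $t=1$ to be the only real obstacle"), sketching a substitution $R=\sin\phi$ but not carrying it through. This is not a routine gap: at the boundary value $R=\tanh L$ the inequality reads $\frac{\cosh L-\sec\thk}{\sinh L}\ge\frac{\cosh L-1}{\sinh L}$, i.e.\ $\sec\thk\le1$, which is false for $\thk>0$; so the $t=1$ check must genuinely exploit that the second hypothesis forces $R$ strictly above $\tanh L$, and this is not visible in your outline. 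The cleanest way to close it is to prove the paper's monotonicity claim, which then yields $\psi(1)\ge\psi(\cos\thk)\ge0$ for free — but at that point the concavity is no longer doing any work and you have reproduced the paper's argument. As written, the proposal is incomplete.
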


\begin{proof}
Let $x$ be a point in $\Ga_1(u)\bs\{u\}$ such that $|x|\ge R$.
Let $(z,w)\in\pi((\Ee\cap\dd\Qu)^\circ)$.
We have $\ellE^-\left(\sec\thk\right)\le\frac{1-\sqrt{1-R^2}}{R}\le 1$.
According to Proposition~\ref{prop:5},
$$(z,w)\in\pi(\FE)\Longrightarrow|z|<|w|\cdot\ellE^-(|w|).$$
On the other hand Proposition~\ref{prop:4.10} implies that
$$(z,w)\in\pi(Q_x)\Longrightarrow|z|\ge|w|\cdot f(R,|w|).$$
So to prove that $\pi(Q_x)\cap\pi(\FE)=\emptyset$ for all $x\in\Ga_1(u)$ with $|x|\ge R$,
it is sufficient to show that
$$f(R,t)-\ellE^{-}(t)\ge0\quad\text{for}~t\in\left[1,\sec\thk\right].$$
Expressing the function $f(R,t)-\ellE^{-}(t)$ as
$$
  \frac{1}{t}\cdot\left(\frac{1}{\sinh L\cdot\sin\frac{\pi}{q}}\cdot\sqrt{1-t^2\cdot\cos^2\frac{\pi}{q}}-\sec\thk\cdot\frac{\sqrt{1-R^2}}{R}\right)
  +\left(\frac{1}{R}-\frac{1}{\tanh L}\right)
$$
we see that it is monotone decreasing on $\left[1,\sec\frac{\pi}{q}\right]$ and hence on $\left[1,\sec\thk\right]$ as $\thk\le\frac{\pi}{q}$.
Therefore for all $t\in[1,\sec\thk]$ we have
\begin{align*}
  f(R,t)-\ellE^{-}(t)
  &\ge f\left(R,\sec\thk\right)-\ellE^{-}\left(\sec\thk\right)\\
  &=\frac{1-\sqrt{1-R^2}}{R}-\ellE^-\left(\sec\thk\right)
  \ge0.
\end{align*}
Thus,we proved that
$$\pi(Q_x)\cap\pi(\FE)=\emptyset$$
for all $x\in\Ga_1(u)$ such that $|x|\ge R$,
i.e.\ for all $x\in\Ga_1(u)\bs\left(\calE(u)\cup\{u\}\right)$.
Therefore,
$$\Fe^\circ=\FE.\qedhere$$
\end{proof}

\section{Proof of Theorem~\ref{thm-corona}}

In this section we will apply Theorem~\ref{prop:7.1} to the special case $\Ga_1=\Ga(p_1,3,3)^k$ with $p_1\ge k+3$ and $\Ga_2=(C_3)^k$ to prove Theorem~\ref{thm-corona}.
We have $p=\lcm(p_1,3)=3p_1$.
Let $\al=\frac{\pi}{2p_1}\in\left(0,\frac{\pi}{10}\right)$.
In the case $q=r=3$ the formulas in Proposition~\ref{prop-corona} become
$$
  \cosh L
  =\frac{1}{\sqrt{3}}\cdot\frac{\cos\al}{\sin\al},~
  \sinh L
  =\frac{1}{\sqrt{3}}\cdot\frac{1}{\sin\al}\cdot\sqrt{\frac{\cos(3\al)}{\cos\al}},~
  \tanh L
  =\frac{1}{\cos\al}\cdot\sqrt{\frac{\cos(3\al)}{\cos\al}}.
$$
To use Theorem~\ref{prop:7.1} in this case,
we need to find $R\ge\tanh L$ such that
$$\{x\in\Ga_1(u)\st|x|<R\}=\calE\cup\{u\}\quad\text{and}\quad\ellE^-\left(\sec\thk\right)\le\frac{1-\sqrt{1-R^2}}{R}.$$
A careful study of the structure of the orbit $\Ga_1(u)$ (for details see Proposition~72 in~\cite{Pr:diss})
shows that $|x|\ge R$ for every point~$x\in\Ga_1(u)\backslash(\calE\cup\{u\})$,
where
$$R=\frac{\cos\al}{\cos(2\al)}\cdot\sqrt{\frac{\cos(3\al)}{\cos\al}}.$$
It is easy to check that $R>\tanh L$.
We then compute
\begin{align*}
  &\ellE^-\left(\sec\thk\right)=\left(\cos\al-\sin\al\cdot\sqrt{4\cos^2\thk-1}\right)\cdot\sqrt{\frac{\cos\al}{\cos(3\al)}},\\
  &\frac{1-\sqrt{1-R^2}}{R}=\frac{\cos(2\al)-\sin\al}{\cos\al}\cdot\sqrt{\frac{\cos\al}{\cos(3\al)}},
\end{align*}
hence the inequality $\ellE^-\left(\sec\thk\right)\le\frac{1-\sqrt{1-R^2}}{R}$ is equivalent to
$$
  \sqrt{4\cos^2\thk-1}
  \ge\frac{\cos^2\al-\cos(2\al)+\sin\al}{\sin\al\cos\al}
  =\cot\left(\frac{\pi}{4}-\frac{\al}{2}\right)
  =\sqrt{\csc^2\left(\frac{\pi}{4}-\frac{\al}{2}\right)-1},
$$
which is equivalent to
$$2\cos\thk\ge\csc\left(\frac{\pi}{4}-\frac{\al}{2}\right).$$
We have
$$
 0<\thk
 =\frac{k\pi}{6p_1}
 \le\frac{(p_1-3)\pi}{6p_1}
 =\frac{\pi}{6}-\al
  \le\frac{\pi}{4}-\frac{3\al}2
  <\frac{\pi}{4}
$$
since $k\le p_1-3$ and $\al<\pi/6$.
Thus
$$2\cos\thk\ge2\cos\left(\frac{\pi}{4}-\frac{3\al}{2}\right).$$
It remains to show that 
$$2\cos\Big(\frac{\pi}{4}-\frac{3\al}{2}\Big)\ge\csc\Big(\frac{\pi}{4}-\frac{\al}{2}\Big).$$
Note that
$$
  2\cos\Big(\frac{\pi}{4}-\frac{3\al}{2}\Big)\sin\Big(\frac{\pi}{4}-\frac{\al}{2}\Big)
  =\cos(2\al)+\sin\al
  \ge1
$$
since $\sin\al\in(0,\frac12)$.
Thus we have checked that the conditions of Theorem~\ref{prop:7.1} are satisfied in this case, hence $\Fe^\circ=\FE$.


\begin{table}
\centering
\begin{center}
    \begin{tabular}{|c|c|}
    \hline
     $k=4$ & \includegraphics[height=4cm]{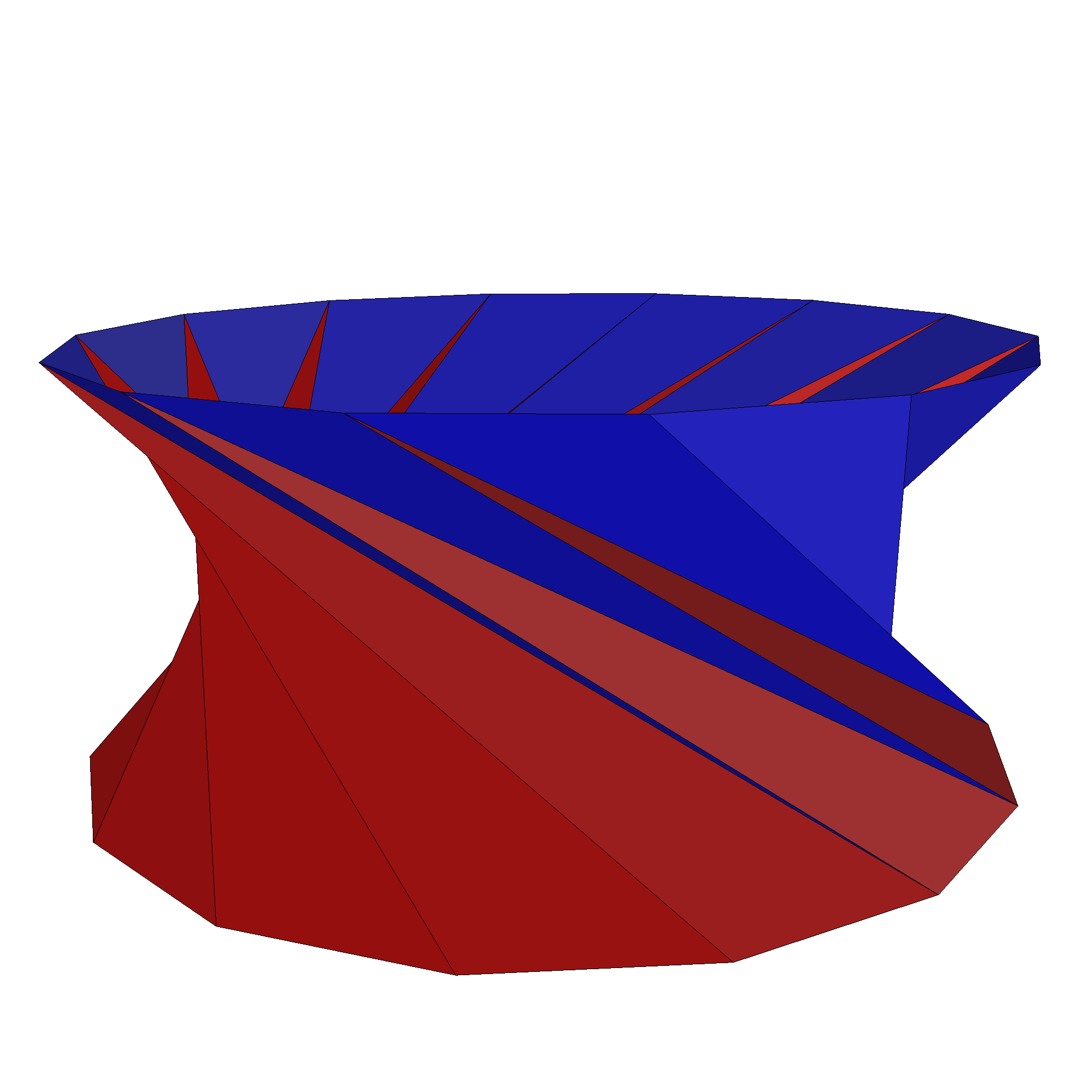} \\  \hline
     $k=5$ & \includegraphics[height=4cm]{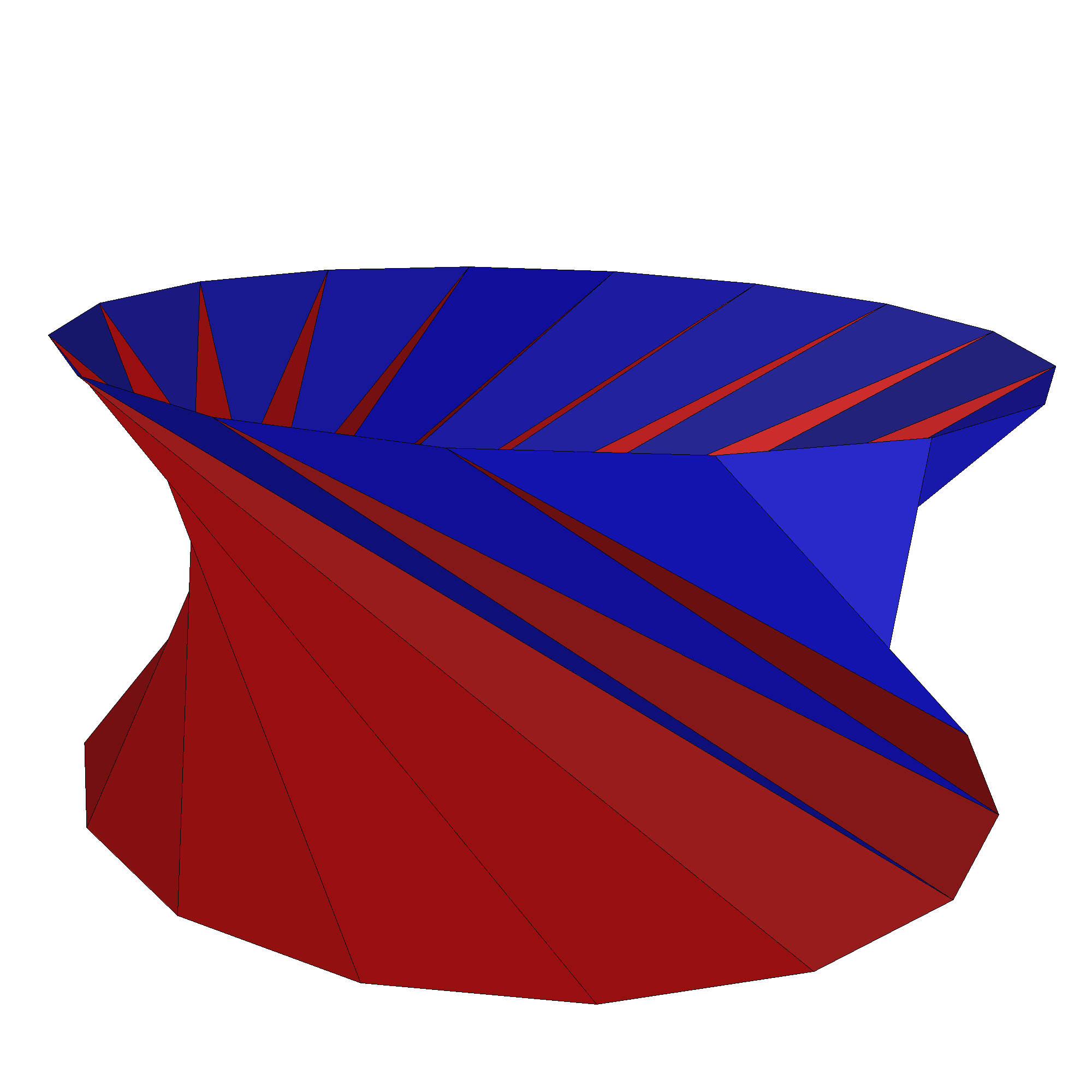} \\  \hline
     $k=7$ & \includegraphics[height=4cm]{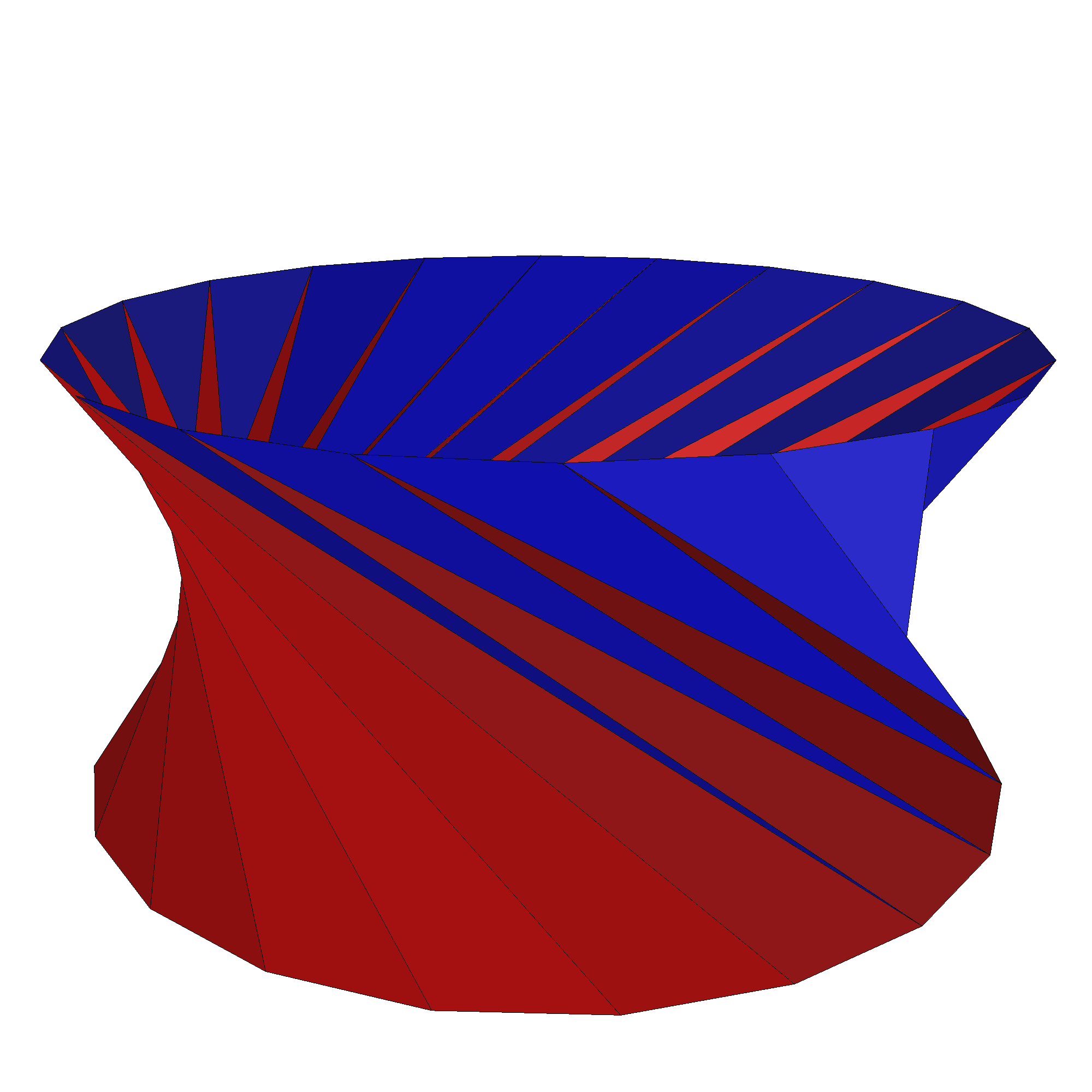} \\ \hline
     $k=8$ & \includegraphics[height=4cm]{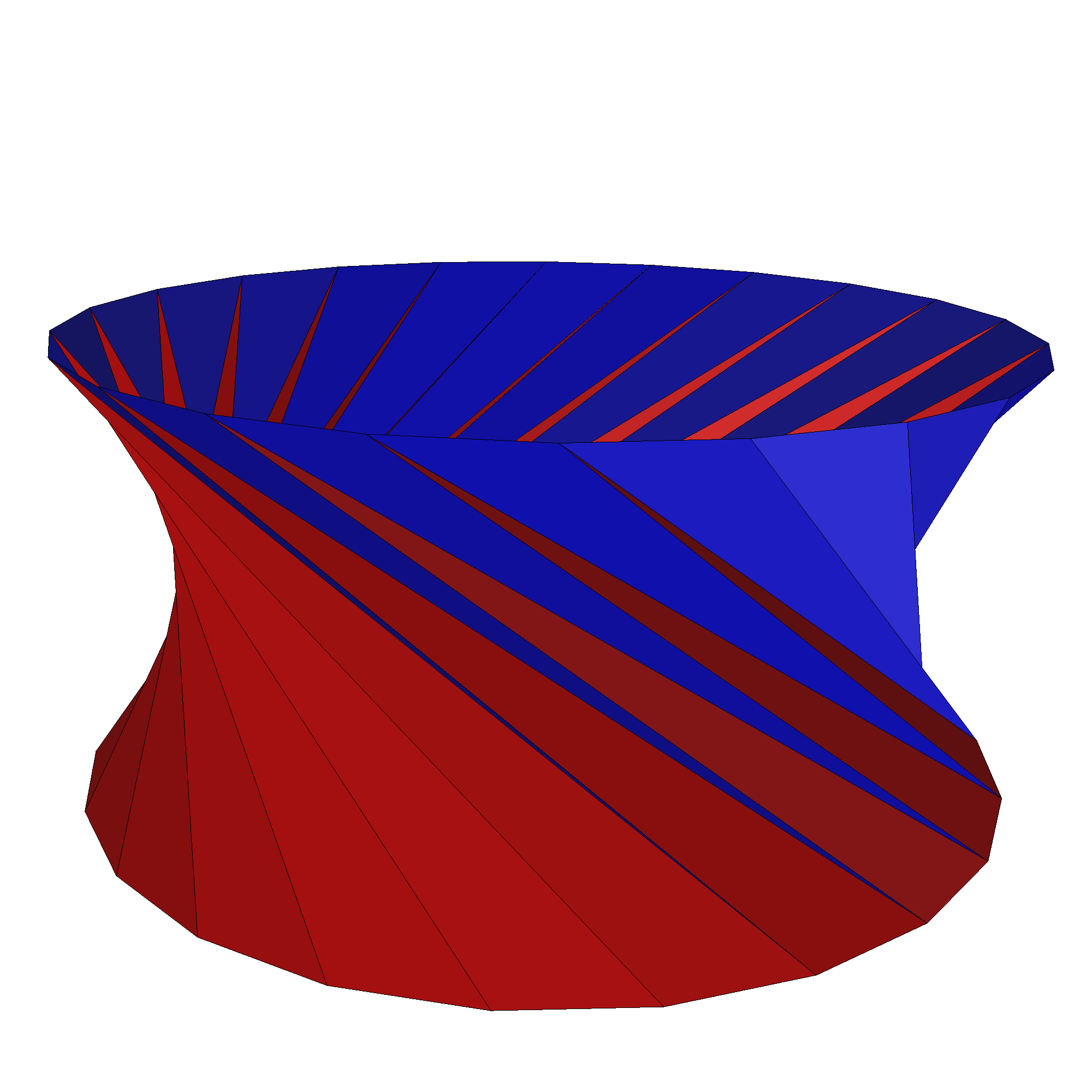} \\ \hline
    \end{tabular}
\end{center}
\caption{Fundamental domains for $\Gamma(k+3,3,3)^k\times(C_3)^k$.}
\label{tab:fund-k}
\end{table}

\begin{table}
\centering
\begin{center}
    \begin{tabular}{|c|c|}
    \hline
     $k=4$ & \includegraphics[width=5cm]{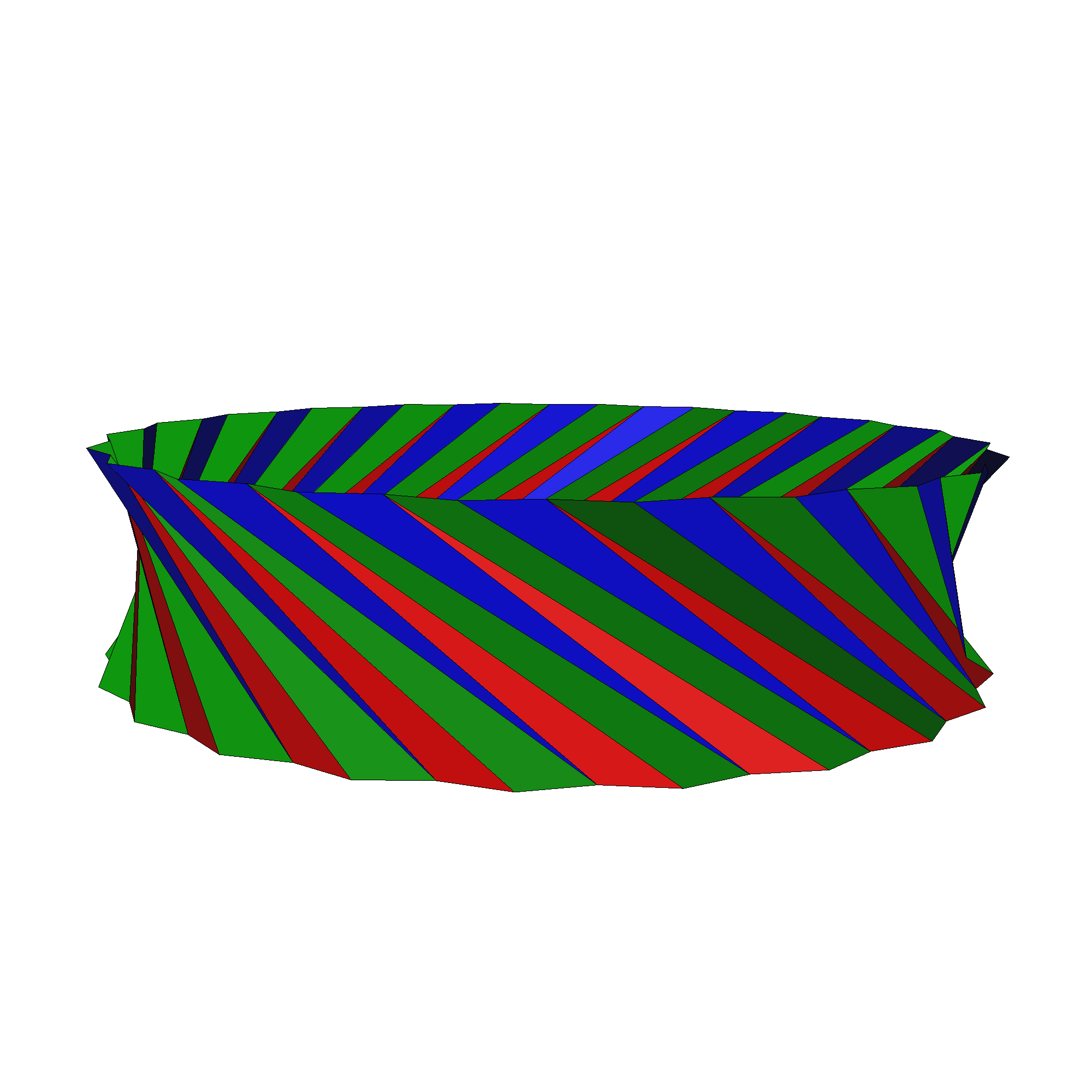} \\ \hline
     $k=5$ & \includegraphics[width=5cm]{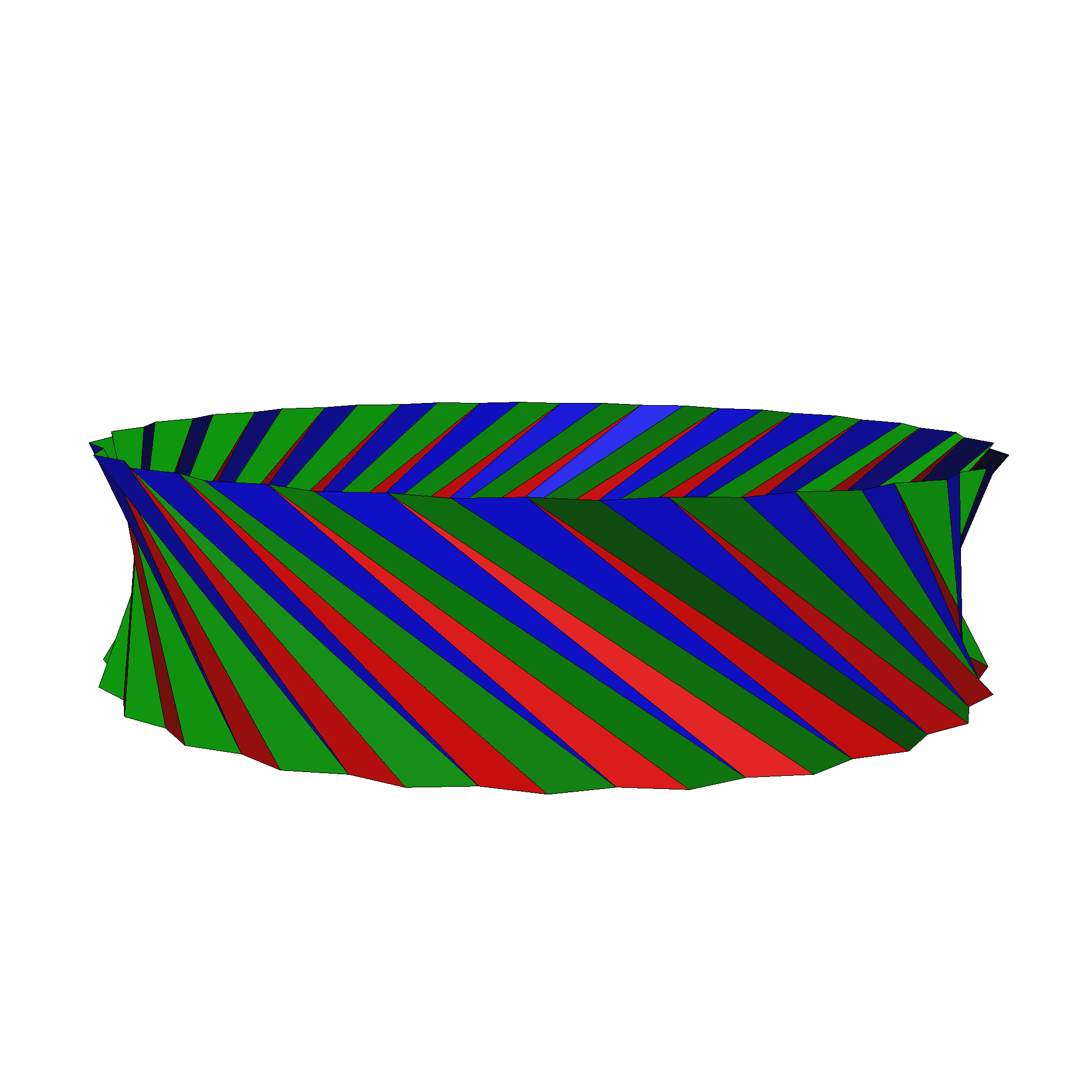} \\ \hline
     $k=7$ & \includegraphics[width=5cm]{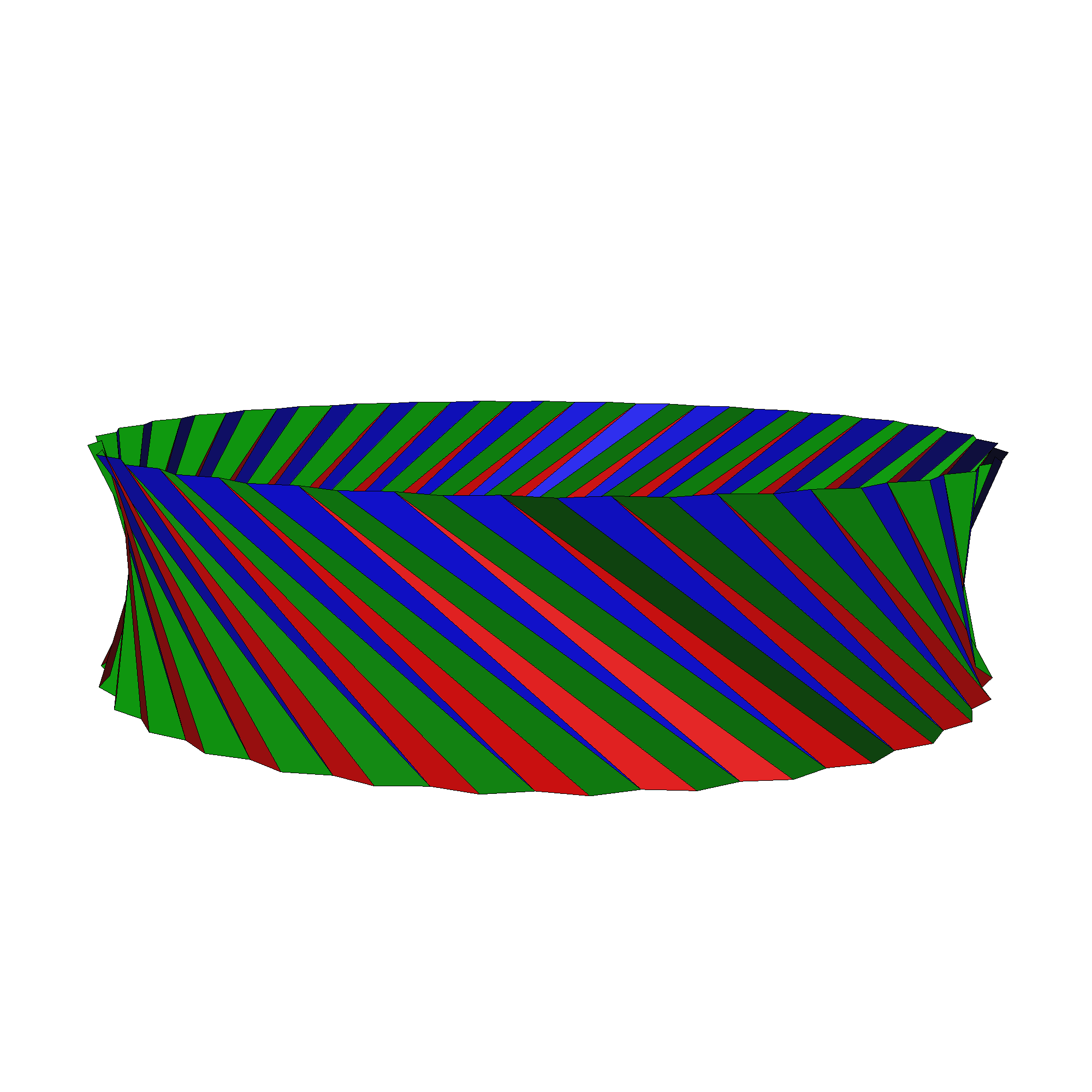} \\ \hline
     $k=8$ & \includegraphics[width=5cm]{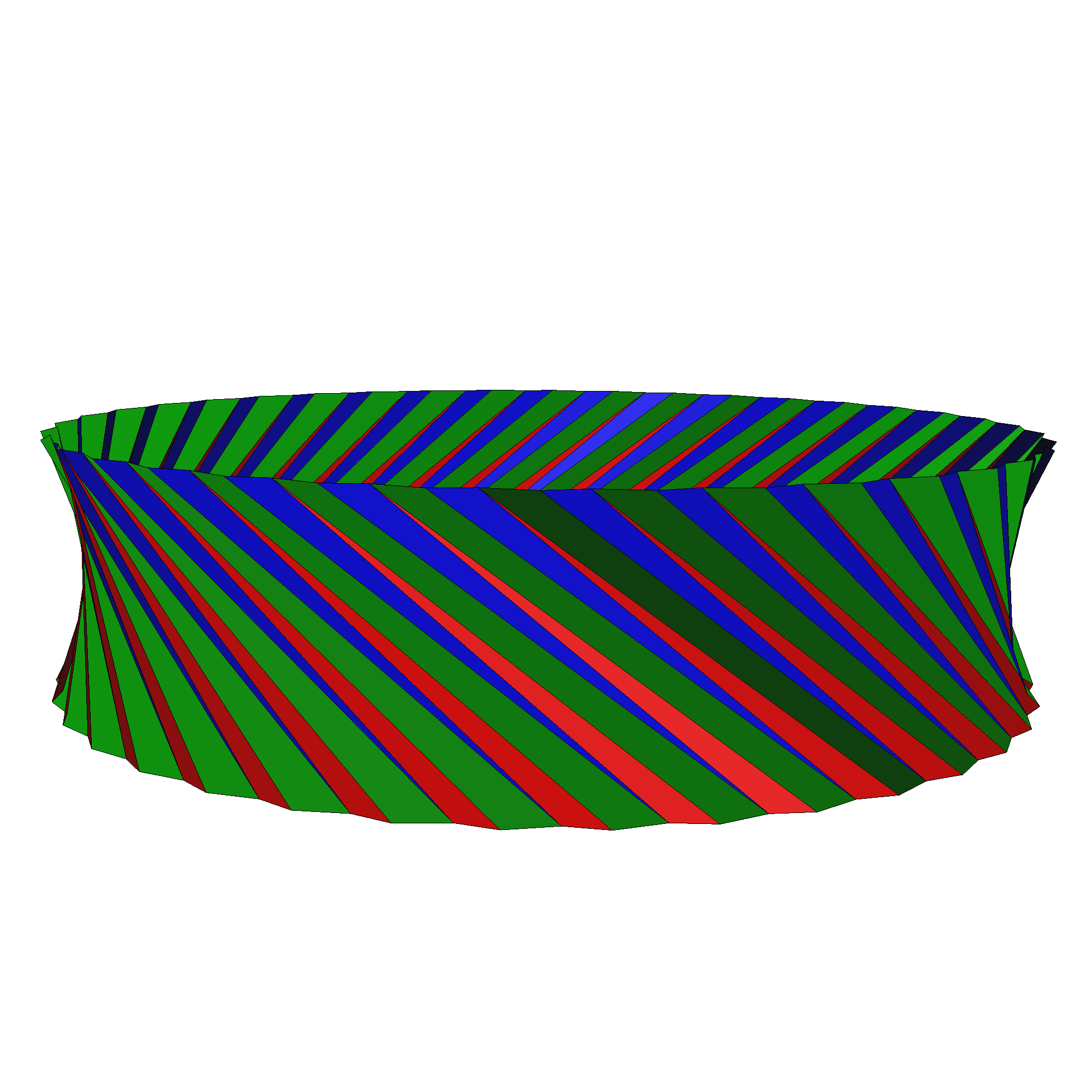} \\ \hline
    \end{tabular}
\end{center}
\caption{Fundamental domains for $\Gamma(2k+3,3,3)^k\times(C_3)^k$.}
\label{tab:fund-2k}
\end{table}

\bigskip\noindent
{\bf Acknowledgements:}
We would like to thank the referees for their helpful comments.


\providecommand{\bysame}{\leavevmode\hbox to3em{\hrulefill}\thinspace}
\providecommand{\MR}{\relax\ifhmode\unskip\space\fi MR }
\providecommand{\MRhref}[2]{%
  \href{http://www.ams.org/mathscinet-getitem?mr=#1}{#2}
}
\providecommand{\href}[2]{#2}

\end{document}